\def\BState{\State\hskip-\ALG@thistlm}
\newcommand{\Rbb}{\mathbb{R}}
\newcommand{\pa}[1]{\left(#1\right)}
\newcommand{\pac}[1]{\left[#1\right]}
\newcommand{\paa}[1]{\left\{#1\right\}}
\newcommand{\aheader}[3]{[$#1$] = #2($#3$)} 
\newcommand{\dd}{\text{d}}
\newcommand{\ddt}[2]{\frac{\dd #1}{\dd #2}}
\newcommand{\Ddtz}[2]{\frac{\D }{\dd #2} #1\Bigr\vert_{#2 = 0}}
\newcommand{\ddp}[2]{\frac{\partial #1}{\partial #2}}
\newcommand{\Dndt}[3]{\frac{\D^#3}{\dd #2^#3} #1}
\newcommand{\Dndtz}[3]{\frac{\D^#3}{\dd #2^#3}#1\Bigr\vert_{#2 = 0}}
\newcommand{\scalp}[2]{\left\langle #1,#2 \right\rangle}
\newcommand{\acos}{\operatorname{acos}}
\newcommand{\RN}[1]{\textup{\uppercase\expandafter{\romannumeral#1}}}
\newcommand{\map}[5]{ #1 \,:\, #2& \to #3 \\ #4& \mapsto #5}
\newcommand{\matr}[2]{\Rbb^{#1\times#2}}
\newcommand{\D}{\operatorname{D}\hspace{-0.08cm}}
\newcommand{\mcal}{\mathcal{M}}
\newcommand{\grad}[1]{\operatorname{grad}\hspace{-0.08cm}#1}
\newcommand{\hessx}[1]{\operatorname{Hess}\hspace{-0.08cm}#1}
\newcommand{\tcal}{\mathcal{T}}
\newcommand{\diff}[3]{\operatorname{D}\hspace{-0.08cm}#1\pa{#2}\pac{#3}}
\newcommand{\trace}{\operatorname{Tr}}
\newcommand{\Ocal}[1]{O\pa{#1}}
\newcommand{\ocal}[1]{o\pa{#1}}
\newcommand{\SPD}[1]{\mathbb{S}^{#1}_+}
\newcommand{\SYM}[1]{\mathbb{S}^{#1}}
\newcommand{\ucal}{\mathcal{U}}
\newcommand{\ncal}{\mathcal{N}}
\newcommand{\rank}[1]{\operatorname{rank}\pa{#1}}
\newcommand{\hF}{\hat F}
\newcommand{\hH}{\hat H}
\newcommand{\hA}{\hat A}
\newcommand{\hx}{{\hat x}}
\newcommand{\hy}{{\hat y}}
\newcommand{\hw}{{\hat w}}
\newcommand{\htt}{\hat t}
\newcommand{\hn}{\hat n}
\newcommand{\hz}{\hat z}
\newcommand{\hR}{\hat R}
\newcommand{\hG}{\hat G}
\newcommand{\hTcal}{\hat \tcal}
\newcommand{\hUcal}{\hat \ucal}
\title{Continuation Methods for Riemannian Optimization}
\author{Axel S\'{e}guin\thanks{Institute of Mathematics, EPF Lausanne, 1015 Lausanne, Switzerland (axel.seguin@epfl.ch, daniel.kressner@epfl.ch)} \and Daniel Kressner\footnotemark[1] }
\date{}
\begin{document}
	\maketitle
	\begin{abstract}
		Numerical continuation in the context of optimization can be used to mitigate convergence issues due to a poor initial guess. In this work, we extend this idea to Riemannian optimization problems, that is, the minimization of a target function on a Riemannian manifold. For this purpose, a suitable homotopy is constructed between the original problem and a problem that admits an easy solution. We develop and analyze a path-following numerical continuation algorithm on manifolds for solving the resulting parameter-dependent problem. To illustrate our developments, we consider two classical applications of Riemannian optimization: the computation of the Karcher mean and low-rank matrix completion. We demonstrate that numerical continuation can yield improvements for challenging instances of both problems.
	\end{abstract}
	
	\section{Introduction}\label{s.fromHomotopyToOptimization}
	\normalem
	
	This work aims at developing and analyzing numerical continuation for Riemannian optimization. Let us first recall the setting of numerical continuation for nonlinear equations. Given a nonlinear equation
	\begin{equation}\label{eq.EuclideanEquation}
		F(x) = 0,
	\end{equation}
	for a smooth function $F:\Rbb^d\to\Rbb^d$, numerical continuation~\cite{allg, deuflhard} is used to track solutions of~\eqref{eq.EuclideanEquation} when the problem is smoothly perturbed. This can be useful for, e.g., ensuring global convergence of the Newton method by progressively transforming a simple problem with a known solution into~\eqref{eq.EuclideanEquation}. More specifically, one considers a parametrized family of equations, 
	\begin{equation}\label{eq.eculdieanParametricProblem}
		G(x,\lambda) = 0, \quad\forall\lambda\in\pac{0,1},
	\end{equation}
	with $G:\Rbb^d\times\pac{0,1} \to \Rbb^d$ such that $G(x,1) = F(x)$ holds and a solution $x_0\in\Rbb^d$ of $G(x_0,0) = 0$ can be easily determined.
	The function $G$ is also known as a \emph{homotopy}. Under suitable assumptions, the solution set 
	\begin{equation}
		G^{-1}(0) = \paa{(x,\lambda)\in \Rbb^d\times\pac{0,1} : G(x,\lambda) = 0}
	\end{equation}
	to the parametric problem~\eqref{eq.eculdieanParametricProblem} contains a smooth $x(\lambda)$, $\lambda\in\pac{0,1}$, connecting $x_1 = x(1)$, the solution to the original problem, to $x_0 = x(0)$.
	
	Homotopy methods are also relevant in optimization.
	Optimization methods for a given target function $f:\mathbb R^n \to \mathbb R$ often aim at retrieving critical points, that is, solutions to
	\begin{equation}
		F(x) = \nabla f (x) = 0.
	\end{equation}
	Homotopy methods can be useful for, e.g., ensuring global convergence (to a critical point) by tracking critical points of a parametrized optimization problem, which amounts to considering
	\begin{equation}\label{eq.EuclideanParametricCriticalPoint}
		G(x,\lambda) = \nabla f(x,\lambda) = 0, \quad\forall\lambda\in\pac{0,1}.
	\end{equation}
	This approach to optimization problems has been widely studied in the literature, both for unconstrained and constrained optimization problems~\cite{deformationOfLinearPrograms1,deformationOfLinearPrograms2}. Among others, this has led to almost always globally convergent methods for non convex optimization~\cite{prob1homotopies} and nonlinear programming~\cite{deformationOfLinearPrograms2,prob1homotopies2}. Another use of homotopy methods is to improve the convergence behavior of a method by, e.g. defining a homotopy in which a regularization term is reduced progressively~\cite{sparsereghomotopy}. 
	
	Riemannian optimization~\cite{absilbook, boumalBook} is concerned with optimizing a target function $f:\mcal \to \Rbb$ on a smooth manifold $\mcal$ equipped with a Riemannian metric. The geometry of $\mcal$ gives the tools to design optimization methods that produce the iterates guaranteed to stay  on the manifold.  
	
	The Riemannian counterpart of the homotopy~\eqref{eq.EuclideanParametricCriticalPoint} is
	\begin{equation}\label{eq.riemannianParametricCriticalPoint}
		\grad f(x,\lambda) = 0, \quad\forall\lambda\pac{0,1},
	\end{equation}
	where $f: \mcal\times\pac{0,1}\to\Rbb$ and $\grad f(x,\lambda)$ denotes the Riemannian gradient of $f(\cdot,\lambda)$ at $x$.  Continuation methods for~\eqref{eq.riemannianParametricCriticalPoint} need to ensure that $x$ stays on $\mcal$. In this work, we use tools from Riemannian optimization to design  path-following algorithms achieving this demand. A related question has been explored in the more restricted setting of time-varying convex optimization on Hadamard manifolds~\cite{maass}, making use of the exponential map.
	In~\cite{manton}, a theoretical study of parameter-dependent Riemannian optimization is performed; the resulting homotopy-based algorithm involves local charts in order to utilize standard continuation algorithms on Euclidean spaces. In this work, we develop continuation methods within the framework of Riemannian optimization as presented in~\cite{absilbook}, which allows for the convenient design of efficient numerical methods in a general setting.
	
	\paragraph{Outline} After recalling in Section~\ref{s.Euclideancontinuation} the general structure of a path-following predictor-corrector continuation algorithm for nonlinear equations on Euclidean spaces, we introduce in Section~\ref{s.riemannianContinuation} the setting of parametric Riemannian optimization and provide sufficient conditions for the numerical continuation problem to be well-posed. We then translate to the Riemannian setting the predictor-corrector algorithm to address them. We analyse the prediction phase, a key step of the algorithm and also propose a step size adaptivity strategy. Finally, Sections~\ref{s.karcherMean} and~\ref{s.matrixCompletion} are dedicated to the application of the algorithm to two classical Riemannian optimization problems, respectively the computation of the Karcher mean and the low-rank matrix completion problem.

	\section{Euclidean predictor-corrector continuation}\label{s.Euclideancontinuation}
	
	To motivate our Riemannian continuation algorithm, let us first recall the standard predictor-corrector continuation approach; see, e.g.~\cite[chapter 2]{allg}. 
	
	Considering the parametric nonlinear equation~\eqref{eq.eculdieanParametricProblem}, let us assume that $0$ is a regular value of $G$, that is, the differential 
	\begin{equation*}
		\D G(x,\lambda) = \pac{G_x(x,\lambda)\lvert G_\lambda(x,\lambda)}\in\matr{d}{d+1},
	\end{equation*}
	has full rank for each $\pa{x,\lambda}\in G^{-1}(0)$. Then the constant-rank level set theorem~\cite[Theorem 5.12]{lee} asserts the set $G^{-1}(0)$ is an embedded submanifold of $\Rbb^{d+1}$ of dimension $1$ or, in other words, the union of disjoint curves.
	Under the stronger assumption that $G_x(x,\lambda)\in\matr{d}{d}$ has full rank, the implicit function theorem~\cite[Theorem 1.3.1]{implicitFunctionTheorem} implies that it is possible to parametrize each solution curve as a function $x(\lambda)$. Moreover, its derivative is given by 
	\begin{equation}\label{eq.tangentVector}
		x'(\lambda) = - G_x(x(\lambda),\lambda)^{-1}\pac{G_\lambda(x(\lambda),\lambda)}.
	\end{equation}
	In turn, the solution curve in~\eqref{eq.eculdieanParametricProblem} can be obtained from solving the following implicit ODE:
	\begin{equation} \label{eq:davidenko}
		\begin{cases*}
			G_x(x,\lambda)\pac{x'} + G_\lambda(x,\lambda) = 0,\quad \forall\lambda\in\pac{0,1}\\
			x(0) = x_0.
		\end{cases*}
	\end{equation}
	This equation is sometimes called Davidenko equation~\cite{davidenko}. The path-following approach consists of   numerically integrating~\eqref{eq:davidenko} from time $\lambda = 0$ to $\lambda = 1$. The existence of the solution to~\eqref{eq:davidenko} is discussed in~\cite[Theorem 4.2.1]{implicitFunctionTheorem}; see also Theorem~\ref{teo.curveExistence} below.

	Given an approximation $x_k\simeq x(\lambda_k)$ of the solution curve at point $\lambda_k$, a predictor-corrector continuation algorithm first performs a prediction step, which obtains a possibly very rough estimate $y_{k+1}$ of the solution curve at the next point $\lambda_{k+1}$. This is followed by a correction phase which aims at projecting this estimate back to the solution curve.
	
	The most common choices for the \emph{prediction step} are:
	\begin{align}
		\text{\textit{classical prediction}}\: &: \: y_{k+1} = x_k\label{eq.EuclideanClassicalPred}\\
		\text{\textit{tangential prediction}}\: &: \: y_{k+1} = x_k + (\lambda_{k+1}-\lambda_k) t(x_k,\lambda_k)\label{eq.EuclideanTangentialPred},
	\end{align}
	where the tangent vector $t(x_k,\lambda_k):=x^\prime(\lambda_k)$ is obtained from~\eqref{eq.tangentVector}. This requires the solution of a linear system, a cost that is offset by increased prediction accuracy, see~\cite[p.238-239]{deuflhard} and Section~\ref{ss.predictionOrder}. Note that~\eqref{eq.EuclideanTangentialPred} coincides with one step of the Euler method applied to~\eqref{eq:davidenko}. 
	
	In the \textit{correction phase}, the refinement of the estimate $y_{k+1}$ is performed by applying a nonlinear equation solver, typically a Newton-type method, on the equation $G(x,\lambda_{k+1}) = 0$ with initial guess $y_{k+1}$. A sufficiently small step size $\lambda_{k+1} - \lambda_k$ leads to a prediction that is accurate enough to yield (very) fast convergence. Various step size selection strategies have been developed in the literature, see~\cite{allg, deuflhard} and Section~\ref{ss.stepSizeAdaptivity}.
	
	\section{Continuation for Riemannian optimization}\label{s.riemannianContinuation}
	
	In this section, we consider a Riemannian optimization problem depending on a scalar parameter. The parameter can be intrinsic to the problem (e.g., time) or has been artificially added to form a homotopy. Examples of homotopies for Riemannian optimization problems will be given in Sections~\ref{s.karcherMean} and~\ref{s.matrixCompletion}. 
	
	\subsection{Riemannian Davidenko equation}
	
	We consider a $d$-dimensional Riemannian manifold $\mcal$ endowed with the Riemannian metric $\scalp{\cdot}{\cdot}$ and let $\nabla$ denote the Riemannian connection. 
	The parameter-dependent objective function 
	\begin{align*}
		\map{f}{\mcal\times\pac{0,1}}{\Rbb}{(x,\lambda)}{f(x,\lambda)} 
	\end{align*}
	is assumed to be smooth in both arguments (at least of class $C^2$).
	
	For fixed $\lambda \in [0,1]$,
	the Riemannian gradient $\grad{f}(x,\lambda)$ of $f(\cdot,\lambda)$ at $x\in\mcal$
	is defined to be the vector in the tangent space $T_x\mcal$ satisfying 
	\begin{equation*}
		\diff{f}{x,\lambda}{\xi} :=
		\ddt{f(\gamma_{x,\xi}(t),\lambda)}{t}{\big\vert}_{t=0} = \scalp{\grad{f}(x,\lambda)}{\xi}_x, \quad \forall  \xi\in T_x\mcal,
	\end{equation*}
	where $\gamma_{x,\xi}$ is a manifold curve of $\mcal$ such that $\gamma_{x,\xi}(0) = x$ and $\dot\gamma_{x,\xi}(0) = \xi$. Likewise, the Riemannian Hessian $\hessx{f}(x,\lambda)$ of $f(\cdot,\lambda)$ at $x\in\mcal$ is the linear map on the tangent space $T_x\mcal$ satisfying 
	\begin{equation*}
		\hessx{f}(x,\lambda)\pac{\xi} = \nabla_\xi\grad{f}(x,\lambda), \quad \forall  \xi\in T_x\mcal.
	\end{equation*}
	
	Consider the numerical continuation problem~\eqref{eq.riemannianParametricCriticalPoint} of tracking critical points of the  objective function as the parameter $\lambda$ varies. Theorem~\ref{teo.curveExistence} below is inspired by~\cite[Theorem 4.2.1]{implicitFunctionTheorem} and gives sufficient conditions for the existence and parametrizability with respect to $\lambda$ of a differentiable manifold curve joining a critical point $x_0\in\mcal$ at $\lambda = 0$ and a critical point at $\lambda = 1$. Note that we let $B(x_0,L):=\{x\in M\colon d(x_0,x) < L\}$ denote a ball on the manifold, where $d(\cdot,\cdot)$ is the manifold distance function induced by the metric. We recall the manifold distance function is defined as
	\begin{equation}\label{eq.distanceFunction}
		d(x,y) = \underset{\gamma\in\Gamma_{xy}}{\inf}\paa{L(\gamma)}
	\end{equation} 
	where $\Gamma_{xy} = \paa{\gamma:\pac{0,1}\to\mcal\,:\, \gamma(0) = x, \,\gamma(1) = y}$ is the set of piecewise smooth curves joining $x$ and $y$ and $L(\gamma) = \int_{0}^{1}\|\gamma'(\tau)\|_{\gamma(\tau)}d\tau$ is the length of the curve.
	
	For the purpose of the analysis, we will assume that $\mcal$ is \emph{complete}.
	
	\begin{theorem}\label{teo.curveExistence}
		Let $\mcal$ be a complete Riemannian manifold, $\ucal$ be an open subset of $\mcal$ and $V$ an open subset of $\mcal\times \Rbb$ such that ${\ucal \times\pac{0,1}\subset V}$. Consider a scalar field $f\in C^2(V,\Rbb)$. 
		Assume that there exist $x_0\in \ucal$ such that $\grad{f}(x_0,0) = 0$ and a constant $L>0$ such that $B(x_0,L)\subseteq \ucal$. Moreover, suppose that for every $(z,\lambda)\in \ucal \times\pac{0,1}$ it holds that
		\begin{enumerate}[label=(\roman*)]	
			\item $\rank{\hessx{f}(z,\lambda)} = d$, \label{hyp.hessFullRank}
			\item $\|\hessx{f}(z,\lambda)^{-1}\pac{\ddp{}{\lambda}\grad{f}(z,\lambda)}\|_z<L$. \label{hyp.boundedDerivative} 
		\end{enumerate}
		Then there exist an open interval $J \supset [0,1]$ and a curve $x \in C^1(J,\mcal)$ verifying
		\begin{equation}\label{eq.criticalPointsCurve}
			x(0) = x_0, \quad \grad{f}(x(\lambda),\lambda)= 0,\quad\forall \lambda\in\pac{0,1}.
		\end{equation} This curve satisfies the initial value problem
		\begin{equation}\label{eq.riemannianDavidenko}
			\begin{cases*}
				\hessx{f}(x(\lambda),\lambda)\pac{ \dot x(\lambda)} + \ddp{\grad f(x(\lambda),\lambda)}{\lambda} = 0,\quad \forall\lambda\in\pac{0,1},\\
				x(0) = x_0.
			\end{cases*}
		\end{equation}
	\end{theorem}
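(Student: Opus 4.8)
The plan is to obtain the curve from the implicit function theorem applied to $\grad{f}(x,\lambda)=0$, rather than from an existence result for the initial value problem~\eqref{eq.riemannianDavidenko}: since $f\in C^2$, the gradient $\grad{f}$ is $C^1$ while the Hessian $\hessx{f}$ is merely continuous, so the Davidenko velocity field $-\hessx{f}(x,\lambda)^{-1}[\ddp{}{\lambda}\grad{f}(x,\lambda)]$ is only $C^0$, which is awkward for ODE uniqueness; the equation~\eqref{eq.riemannianDavidenko} will instead be recovered as a by-product of differentiating the constraint.

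First I would construct a local branch near $(x_0,0)$. Working in a chart around $x_0$ (equivalently, a local trivialisation of $T\mcal$), the section $\grad{f}$ is represented near $(x_0,0)$ by a $C^1$ map into $\Rbb^d$ that vanishes at the point representing $(x_0,0)$ and whose partial derivative in the spatial variable there is (the coordinate matrix of) $\hessx{f}(x_0,0)$, invertible by~\ref{hyp.hessFullRank}. The classical implicit function theorem~\cite[Theorem 1.3.1]{implicitFunctionTheorem} then yields $\delta>0$ and a unique $x\in C^1((-\delta,\delta),\mcal)$ with $x(0)=x_0$ and $\grad{f}(x(\lambda),\lambda)=0$. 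Differentiating this identity, and using that the covariant derivative along $x$ of $\lambda\mapsto\grad{f}(x(\lambda),\lambda)$ equals $\hessx{f}(x(\lambda),\lambda)[\dot x(\lambda)]+\ddp{}{\lambda}\grad{f}(x(\lambda),\lambda)$, shows this branch solves~\eqref{eq.riemannianDavidenko}; conversely, any $C^1$ solution of~\eqref{eq.riemannianDavidenko} has $\lambda\mapsto\grad{f}(x(\lambda),\lambda)$ with vanishing covariant derivative and vanishing at $\lambda=0$, hence equal to zero throughout, so it is a curve of critical points. This equivalence lets me glue local branches into a maximal one on an interval $(a,b)$ containing $0$.

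Next I would show $b>1$ (the inequality $a<0$ being immediate from the local branch). Suppose $b\le 1$. Whenever $x(\lambda)\in\ucal$, combining~\eqref{eq.riemannianDavidenko} with hypothesis~\ref{hyp.boundedDerivative} gives $\|\dot x(\lambda)\|_{x(\lambda)}=\|\hessx{f}(x(\lambda),\lambda)^{-1}[\ddp{}{\lambda}\grad{f}(x(\lambda),\lambda)]\|_{x(\lambda)}<L$. A bootstrap on $[0,b)$ — examining the supremum of those $t<b$ with $x([0,t])\subseteq B(x_0,L)$ — upgrades this to $x(\lambda)\in B(x_0,L)\subseteq\ucal$ for all $\lambda\in[0,b)$, with $d(x_0,x(\lambda))\le\int_0^\lambda\|\dot x(\tau)\|_{x(\tau)}\,d\tau<L\lambda\le L$: the curve moves too slowly to reach the boundary of $B(x_0,L)$ within parameter range $[0,1]$. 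Consequently $x$ is $L$-Lipschitz on $[0,b)$ and has total length strictly below $L$, so by completeness of $\mcal$ it converges as $\lambda\to b^-$ to some $x_b\in B(x_0,L)\subseteq\ucal$; continuity gives $\grad{f}(x_b,b)=0$, while~\ref{hyp.hessFullRank} makes $\hessx{f}(x_b,b)$ invertible. The implicit function theorem therefore applies again at $(x_b,b)$ and, matched with $x$ near $b^-$ by its local uniqueness, prolongs the branch past $b$, contradicting maximality. Hence $b>1$, and with $a<0$ the interval $J:=(a,b)\supset[0,1]$ carries a $C^1$ curve with all the stated properties.

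The hard part will be the a priori estimate in the last step. Hypothesis~\ref{hyp.boundedDerivative} is stated with exactly the radius $L$ of $B(x_0,L)$ so that a curve moving with the Davidenko velocity cannot leave the region where~\ref{hyp.hessFullRank} and~\ref{hyp.boundedDerivative} are available; making the bootstrap rigorous, and using completeness both to exclude escape to infinity in finite parameter range and to supply the limit point $x_b$, is where the care is concentrated. By contrast, the Riemannian formulation of the implicit function theorem is routine here, because the continuation always proceeds along critical points, where the spatial derivative of the local representative of $\grad{f}$ coincides with $\hessx{f}$ regardless of the trivialisation chosen.
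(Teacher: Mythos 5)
Your proposal is correct and follows essentially the same route as the paper's proof: an implicit-function-theorem branch in a local chart around $(x_0,0)$ (using that the gradient vanishes there, so the chart representative's spatial derivative coincides with $\hessx{f}$), a maximal extension argument, the bound $\|\dot x(\lambda)\|_{x(\lambda)}<L$ from hypothesis~\ref{hyp.boundedDerivative} keeping the curve inside $B(x_0,L)\subseteq\ucal$ so that completeness yields a limit point, and a second application of the implicit function theorem at that limit point to contradict maximality. The only cosmetic difference is bookkeeping: the paper builds the condition $x(\lambda)\in\ucal$ into its definition of admissible extensions, whereas you recover it via a bootstrap on the first exit from $B(x_0,L)$.
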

	Hypothesis~\ref{hyp.hessFullRank} guarantees the parametrizability with respect to $\lambda$ by ensuring the implicit ODE~\eqref{eq.riemannianDavidenko} is well-defined. For a fixed $\lambda$, it is an analogous assumption guaranteeing local quadratic convergence of the Riemannian Newton method~\cite[Theorem 6.3.2]{absilbook}. Hypothesis~\ref{hyp.boundedDerivative} ensures that the manifold curve can be parametrized up to $\lambda = 1$ as the limit point of the curve for $\lambda\to \lambda^*$, for any $0<\lambda^*<1$, is guaranteed to stay in the region $\ucal$ where the Riemannian Hessian is still of full rank. These hypotheses are global a priori assumption that are difficult to verify in practice. Yet, for a large class of problems it is reasonable to assume the Riemannian Hessian is of full rank at the starting point $(x_0,0)$, and therefore the solution curve is at least parametrizable on a possibly smaller interval $\pac{0,\tau}\subseteq \pac{0,1}$. In the following, we call the initial value problem~\eqref{eq.riemannianDavidenko} the \emph{Riemannian Davidenko equation}.  Note that by Hypothesis~\ref{hyp.hessFullRank}, if $x_0$ is a local minimum, then the solution curve to the Riemannian Davidenko equation is a manifold curve of local minima. If we further assume the objective function to be geodesically convex~\cite[Chapter 11]{boumalBook} for each $\lambda\in\pac{0,1}$, this implies that the solution curve consists of global minima.
	
	The following proof of Theorem~\ref{teo.curveExistence} is an adaptation of the proof for the Euclidean case~\cite[Theorem 4.2.1]{implicitFunctionTheorem}. 
	\begin{proof}(of Theorem~\ref{teo.curveExistence}) 
		Consider a local chart $\varphi: \ncal \to \Rbb^d$ such that $x_0\in\ncal$ and ${\ncal\times\pac{0,1}\subseteq V}$. We give a local coordinate representation of the gradient vector field through this local chart by defining 
		\begin{equation*}
			F(\hx,\lambda) := \D\varphi(\varphi^{-1}\pa{\hx})\pac{\grad{f}(\varphi^{-1}(\hx),\lambda)},\quad\forall (\hx,\lambda)\in\varphi\pa{\ncal}\times\pac{0,1}.
		\end{equation*} 
		The Jacobian of this vector field along the vector $\hat v\in\Rbb^d$ is 
		\begin{align*}
			\D_{\hx} F(\hx,\lambda)\pac{\hat v} &= \D^2\varphi(\varphi^{-1}(\hx))\pac{\grad{f}(\varphi^{-1}(\hx),\lambda),\D\varphi^{-1}(\hx)\pac{\hat v}}\\
			&+\D\varphi(\varphi^{-1}(\hx))\pac{\nabla_{\D\varphi^{-1}(\hx)\pac{\hat v}}\grad{f}(\varphi^{-1}(\hx),\lambda)}.
		\end{align*}
		Letting $\hx_0 = \varphi(x_0)$, we find
		\begin{align*}
			F(\hx_0,0) = 0,\qquad
			\D_{\hx} F(\hx_0,0) = \D\varphi(\varphi^{-1}(\hx_0))\circ{\hessx{f}(\varphi^{-1}(\hx_0),\lambda)}\circ\D\varphi^{-1}(\hx_0).
		\end{align*}
		Since local charts are diffeormorphisms, hypothesis (i) implies that $\D_\hx F(\hx_0,0)$ has full rank $d$. Then by applying the implicit function theorem to $F$ at $(\hx_0,0)$ there exist an open interval $I$ containing $0$
		and $\hx \in C^1(I,\varphi(\ncal))$ such that 
		\begin{align*}
			\hx(0) = \hx_0,\qquad 
			F(\hx(\lambda),\lambda) = 0, \quad\forall\lambda\in I.
		\end{align*}
		Defining $x(\lambda) := \varphi^{-1}(\hx(\lambda))$ for $\lambda \in I$, it holds that $x(0) = x_0$. Moreover, there exists 
		$\lambda_0>0$ such that 
		\begin{enumerate}[label=(\arabic*)]
			\item $x$ is defined on $\left[0, \lambda_0\right)$,
			\item $\grad{f}(x(\lambda),\lambda) = 0\quad\forall\lambda\in\left[0, \lambda_0\right)$,
			\item $x$ is continuously differentiable on $\left[0, \lambda_0\right)$,
			\item $x(\lambda)\in \ucal, \quad\forall\lambda\in\left[0, \lambda_0\right)$.
		\end{enumerate}
		Define the following 
		\begin{equation*}
			\lambda^* = \sup\paa{\lambda_0: \text{there exists $x$ such that (1), (2), (3) and (4) are verified}}.
		\end{equation*}
		By the discussion above, $\lambda^*>0$.
		If $\lambda^*>1$, the result is proved. Therefore assume that $0<\lambda^*\le 1$. Differentiation with respect to $\lambda$ of (2) yields
		\begin{equation*}
			x'(\lambda) =- \hessx{f}(x(\lambda),\lambda)^{-1}\pac{\ddp{}{\lambda}\grad{f}(x(\lambda),\lambda)},\quad\forall\lambda\in\left[0, \lambda^*\right).
		\end{equation*} 
		Due to condition (4) and hypothesis $(ii)$ we have $\|x'(\lambda)\|_{x(\lambda)}<L$ for every $\lambda\in\left[0, \lambda^*\right)$.
		This implies 
		\begin{align} \label{eq:inequalityL}
			\tilde L:= \lim_{\lambda\uparrow\lambda^*} d(x_0,x(\lambda))\leq  \lim_{\lambda\uparrow\lambda^*} \int_{0}^{\lambda}\|x'(\tau)\|_{x(\tau)}\dd\tau< \lim_{\lambda\uparrow\lambda^*} \int_{0}^{\lambda}L\dd\tau\leq L.
		\end{align}
		Given a sequence $\{\lambda_k\}$ with $\lambda_k \to \lambda^*$, it follows in an analogous fashion that $\{x(\lambda_k)\}$ is a Cauchy sequence. Because of~\eqref{eq:inequalityL}, 
		$\{x(\lambda_k)\}$ is contained in the closed ball 
		$\overline{B(x_0,\tilde L)} \subset \ucal$ and therefore converges to some $x^* \in \ucal$.

		Now, using a local chart $\psi: \ncal' \to \Rbb^d$
		such that $x^*\in\ncal'$ we can apply the implicit function theorem to
		\begin{equation*}
			\tilde F(\hz,\lambda) = \D\psi(\psi^{-1}\pa{\hz})\pac{\grad{f}(\psi^{-1}(\hz),\lambda)}
		\end{equation*} 
		at $(\psi(x^*),\lambda^*)$ and thus extend $x(\lambda)$ to a larger interval. This contradicts the definition of $\lambda^*$.
	\end{proof}
	
	\subsection{Riemannian predictor-corrector continuation}
	
	The Riemannian \linebreak predictor-corrector continuation algorithm mimics the Euclidean version from Section~\ref{s.Euclideancontinuation} by numerically integrating the Riemannian Davidenko equation~\eqref{eq.riemannianDavidenko}. For the moment, we consider $N$ steps with fixed step size $h_k = 1 / N$, for $k = 1,\dots,N$. A suitable adaptive step size strategy will be discussed in Section~\ref{ss.stepSizeAdaptivity}.
	
	\paragraph{Prediction}
	The classical continuation scheme~\eqref{eq.EuclideanClassicalPred} can be trivially extended to the Riemannian case without any adjustment. The initial guess for the subsequent correction phase is simply
	\begin{equation}\label{eq.classicalPred}
		y_{k+1} = x_k,
	\end{equation}
	the iterate at the previous step of the algorithm. 
	
	The Riemannian extension of the tangential prediction strategy~\eqref{eq.EuclideanTangentialPred} is more involved. It consists of performing a step in the direction of the tangent vector of the solution curve. This tangent vector   can be computed from the Davidenko equation as
	\begin{equation}\label{eq.predictionDirection}
		t(x_k,\lambda_k) := -\hessx{f}(x_k,\lambda_k)^{-1}\pac{ \ddp{\grad f(x_k,\lambda_k)}{\lambda}} \in T_{x_k}\mcal.
	\end{equation}
	We note that this involves the solution of a linear system with the Riemannian Hessian. If its solution by a direct solver (e.g., via the Cholesky decomposition) is too expensive, especially for 
	manifolds of higher dimension, matrix-free Krylov type methods~\cite[chapter 5]{krylovMethodBook} can be used instead.
	
	In the Euclidean case, a tangent vector was simply added to the current iterate. In the manifold setting, this needs to be combined with a retraction in order to make sure that the result is again on the manifold.
	A retraction is a smooth mapping $R\,:\, T\mathcal{M}\to\mathcal{M}$ with the following two properties:
	\begin{itemize}
		\item[1)] $R_x(0_x) = x$, where $0_x$ is the zero element of $T_x\mathcal{M}$ and $R_x$ denotes the restriction of $R$ to $T_x\mathcal{M}$; 
		\item[2)] $\D R_x(0_x) = \operatorname{Id}_{T_x\mathcal{M}}$, where we have identified $T_{0_x}T_x\mathcal{M}\simeq T_x\mathcal{M}$ and $\operatorname{Id}_{T_x\mathcal{M}}$ is the identity mapping on $T_x\mathcal{M}$. 
	\end{itemize}
	These properties ensure that the retraction is a first order approximation of the Riemannian exponential map~\cite[section 10.2]{boumalBook}; the second property is also known as  \emph{local rigidity}.
	More details can be found in~\cite[Chapter 4]{absilbook}; see also Sections~\ref{s.karcherMean} and~\ref{s.matrixCompletion} for examples. 
	The Riemannian tangential prediction step is defined as 
	\begin{equation}\label{eq.tangentialPred}
		y_{k+1} = R_{x_k}(h_k t(x_k,\lambda_k)),
	\end{equation}
	where we recall that $h_k$ denotes the step size.
	
	In the case of a manifold embedded into an Euclidean space, the metric projection yields the particular retraction $R_x^\pi(v) := \pi\pa{x + v}$; see~\cite[Chapter 4]{absilbook}, which is also used in the context of numerically integrating differential equations on embedded submanifolds~\cite{hairer}. 
	\paragraph{Correction} In analogy to the Euclidean case
	from Section~\ref{s.Euclideancontinuation},
	we rely on a second order method for refining the estimate $y_{k+1}$ such that it becomes a (nearly) critical point of $f(\cdot,\lambda_{k+1})$. The tolerance on the Riemannian gradient norm is chosen small enough to closely track the solution curve, typically $10^{-6}$.
	The Riemannian Newton (RN) method~\cite[chapter 6]{absilbook} can be used for this purpose; its basic form is described in Algorithm~\ref{alg.riemannianNewton}. Note that the Riemannian Newton method can be replaced by any locally superlinearly convergent method, e.g., the Riemannian Trust Region (RTR) method \cite{absilbook}[AMS08, Chapter 7] or the Riemannian BFGS method \cite{bfgs}. These methods can take full advantage of sufficiently accurate initial guess provided by the prediction, yielding a fast correction phase. Although a first order method such as steepest descent could in principle be used, they would not benefit the warmstarting fully as they do not exhibit accelerated convergence near a critical point. \\
	\begin{algorithm}
		\caption{\aheader{x^*}{RiemannianNewton}{x^{(0)},f,\mathrm{tol}, N_{\text{inner}}}}
		\label{alg.riemannianNewton}
		\begin{algorithmic}[1]
			\While{$\|\grad f({x^{(j)}})\|> \mathrm{tol}$ $\wedge$ $j \leq N_{\text{inner}}$}
			\State Solve $\hessx{f}(x^{(j)})\pac{n^{(j+1)}} = -\grad{f}(x^{(j)})$;
			\State $x^{(j+1)} = R_{x^{(j)}}(n^{(j+1)})$;
			\EndWhile\\
			\textbf{end}
			\State \textbf{return} $x^* = x^{(j)}$;
		\end{algorithmic}
	\end{algorithm}
	
	\paragraph{Riemannian-Newton Continuation (RNC)}
	The whole predictor-corrector scheme for Riemannian manifolds is sketched in Algorithm~\ref{alg.riemannianContinuation}. The optional adaptive step size strategy in line~\ref{line:adaptive} will be explained in Section~\ref{ss.stepSizeAdaptivity} below.
	
	\begin{algorithm}
		\caption{$\paa{x_k,\lambda_k}$ = RiemannianNewtonContinuation$\pa{x_0, f, N_{\text{steps}}, \mathrm{tol}, N_{\text{inner}}}$}
		\label{alg.riemannianContinuation}
		\begin{algorithmic}[1]
			\State $h_0 = \frac{1}{N_{\text{steps}}}$, $\lambda_0 = 0$, $k = 0$;
			\While{$\lambda_k<1$}
			\If{\text{tangentialPrediction}}
			\State Solve $ \hessx f(x_k,\lambda_k)\pac{t_{k}} = - \ddp{\grad{f}(x_k,\lambda_k)}{\lambda}  $;
			\If{\text{adaptStepSize}}
			\State \label{line:adaptive} Determine the new step size $h_{k}$ with Algorithm~\ref{alg.stepSizeAdaptivity}.
			\EndIf
			\State $y_{k+1} = R_{x_{k}}(h_k t_{k})$;
			\Else
			\State $y_{k+1} =x_{k}$;
			\EndIf
			\State $\lambda_{k+1} =  \min\paa{\lambda_{k}+h_k,1}$
			\State $x_{k+1} = \operatorname{RiemannianNewton}\pa{y_{k+1}, f(\cdot,\lambda_{k+1}),\mathrm{tol}, N_{\text{inner}}}$;
			\If{$ \|\grad f(x_{k+1},\lambda_{k+1})\|>\operatorname{tol}$}
			\State Error("Traversing failed at step k.");
			\Else 
			\State $k = k+1$;
			\EndIf
			\EndWhile\\
			\textbf{end}
			\State \textbf{return} $\paa{x_j,\lambda_j}_{j = 1,\dots,k}$
		\end{algorithmic}
	\end{algorithm}
	
	\subsection{Prediction order analysis} \label{ss.predictionOrder}
	
	An accurate prediction step leads to fast convergence in the correction step (Algorithm~\ref{alg.riemannianNewton}). The concept of order is used in the Euclidean case~\cite[p.238-239]{deuflhard} to qualitatively capture this accuracy. The following definition extends this concept to the  Riemannian case by considering the prediction path $y(h) \in \mcal$, $h>0$, obtained from the prediction step by varying the step size $h$.
	\begin{definition}[Prediction order]
		Let $x(\lambda)$ be the solution curve defined by~\eqref{eq.riemannianDavidenko} for ${\lambda\in\pac{0,1}}$. A prediction path $y(h)$ such that $y(0) = x(\lambda)$ is said to be of \emph{order p} if there exists a constant $\eta_p>0$, such that 
		\begin{equation*}
			d(x(\lambda+h),y(h))\leq \eta_p h^p,\quad\forall\lambda\in[0,1),
		\end{equation*}
		holds for all sufficiently small $h > 0$.
	\end{definition}
	
	In the following we will prove that the prediction orders for the Riemannian classical and tangential prediction schemes match the ones in the Euclidean case. More specifically, the following lemmas show that classical prediction~\eqref{eq.classicalPred} has order 1 while  tangential prediction~\eqref{eq.tangentialPred} has order 2.
	\begin{lemma}
		The classical prediction path $y_c(h) = x(\lambda)$ has order 1.
	\end{lemma}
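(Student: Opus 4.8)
The plan is to estimate the manifold distance $d(x(\lambda+h), y_c(h)) = d(x(\lambda+h), x(\lambda))$ directly, using the fact that the solution curve $x(\lambda)$ is $C^1$ on a neighborhood of $[0,1]$ (by Theorem~\ref{teo.curveExistence}). First I would recall that the manifold distance between two points on a curve is bounded by the length of the arc of that curve joining them: for $h>0$ small enough that $\lambda + h$ remains in the interval $J$ of Theorem~\ref{teo.curveExistence}, the restriction $x|_{[\lambda,\lambda+h]}$ is an admissible curve in $\Gamma_{x(\lambda),x(\lambda+h)}$, so
\begin{equation*}
d(x(\lambda+h),x(\lambda)) \leq L(x|_{[\lambda,\lambda+h]}) = \int_{\lambda}^{\lambda+h} \|\dot x(\tau)\|_{x(\tau)}\,\dd\tau.
\end{equation*}

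Next I would bound the integrand. Since $x \in C^1(J,\mcal)$ and $[0,1]$ is compact, the map $\lambda \mapsto \|\dot x(\lambda)\|_{x(\lambda)}$ is continuous on the compact set $[0,1]$ (or on a slightly larger compact subinterval of $J$ containing $[0,1]$ in its interior), hence bounded; call the bound $\eta_1 := \sup_{\tau} \|\dot x(\tau)\|_{x(\tau)} < \infty$. This constant is independent of $\lambda$ and $h$. Plugging this into the integral gives $d(x(\lambda+h),x(\lambda)) \leq \eta_1 h$ for all $\lambda \in [0,1)$ and all sufficiently small $h>0$, which is exactly the order-$1$ estimate required by the definition. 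I would note that $y_c(0) = x(\lambda)$ trivially holds, so $y_c$ is a legitimate prediction path.

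The argument is essentially routine; the only point requiring a little care is the uniformity of the constant $\eta_p = \eta_1$ with respect to $\lambda$, which is why I invoke compactness of $[0,1]$ together with the $C^1$ regularity and completeness of $\mcal$ from Theorem~\ref{teo.curveExistence} rather than a purely local bound. A secondary subtlety is ensuring $\lambda + h$ stays inside the domain $J$ of the curve: since $J \supset [0,1]$ is open, there is a fixed $\delta>0$ with $[0, 1+\delta) \subset J$, and the estimate then holds for all $0 < h < \delta$, which suffices for the "sufficiently small $h$" clause in the definition of prediction order. No genuine obstacle is expected here; the content of the lemma is simply that translating in $\lambda$ without moving along the manifold incurs an $\Ocal{h}$ error, and the real interest lies in contrasting this with the $\Ocal{h^2}$ behavior of tangential prediction treated in the next lemma.
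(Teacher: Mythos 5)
Your proof is correct and follows essentially the same route as the paper's: bound $d(x(\lambda+h),x(\lambda))$ by the arc length $\int_{\lambda}^{\lambda+h}\|\dot x(\tau)\|_{x(\tau)}\,\dd\tau$ and then by $h$ times a uniform bound on $\|\dot x\|$ over $[0,1]$ (the paper takes the max over $\pac{0,1}$, you take a sup over a slightly larger compact subinterval of $J$). Your added remarks on the openness of $J$ and the uniformity of the constant are fine but only make explicit what the paper leaves implicit.
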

	\begin{proof}Applying the definition of distance function, we obtain for sufficiently small $h>0$ that
		\begin{align*}
			d(x(\lambda + h),y_c(h)) &= d\pa{x(\lambda+h),x(\lambda)}\leq \int_{\lambda}^{\lambda+h}\|x'(\tau)\|\dd\tau\leq h \underset{\tau\in\pac{\lambda,\lambda+h}}{\max}\paa{\|x'(\tau)\|}\\
			&\leq h\underset{\tau\in\pac{0,1}}{\max}\paa{\|x'(\tau)\|}
		\end{align*}
	\end{proof}
	\begin{lemma}
		If $x(\cdot)\in C^2([0,1))$, the tangential prediction path  $$y_t(h) = R_{x(\lambda)}(ht(x(\lambda),\lambda))$$ has order 2.
	\end{lemma}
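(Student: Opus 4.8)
The plan is to compare the tangential prediction path $y_t(h) = R_{x(\lambda)}(h\,t(x(\lambda),\lambda))$ with the true solution curve $x(\lambda+h)$ by passing to a common local coordinate chart around $x(\lambda)$ and carrying out a Taylor expansion to second order in $h$. Fix $\lambda \in [0,1)$ and abbreviate $p = x(\lambda)$, $v = t(p,\lambda) \in T_p\mcal$. Choose a chart $\varphi$ with $p$ in its domain; both curves $h \mapsto \varphi(x(\lambda+h))$ and $h \mapsto \varphi(y_t(h))$ are then $C^2$ curves in $\Rbb^d$ agreeing at $h=0$ (both equal $\varphi(p)$). The claim will follow from showing their first derivatives at $h=0$ also agree, since then a second-order Taylor remainder gives $\|\varphi(x(\lambda+h)) - \varphi(y_t(h))\| \le C h^2$, and transporting this back to the manifold via the local bi-Lipschitz equivalence of $d(\cdot,\cdot)$ and the Euclidean chart distance (valid on a compact neighborhood) yields $d(x(\lambda+h), y_t(h)) \le \eta_2 h^2$.

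The key steps in order would be: (1) For the solution curve, $\frac{\dd}{\dd h}\varphi(x(\lambda+h))|_{h=0} = \D\varphi(p)[\dot x(\lambda)]$, and by Theorem~\ref{teo.curveExistence} (specifically~\eqref{eq.riemannianDavidenko}) we have $\dot x(\lambda) = -\hessx{f}(p,\lambda)^{-1}[\partial_\lambda \grad f(p,\lambda)] = v$. (2) For the prediction path, $\frac{\dd}{\dd h}\varphi(R_p(hv))|_{h=0} = \D\varphi(p)\big[\frac{\dd}{\dd h}R_p(hv)|_{h=0}\big] = \D\varphi(p)[\D R_p(0_p)[v]] = \D\varphi(p)[v]$, using the local rigidity property $\D R_p(0_p) = \operatorname{Id}_{T_p\mcal}$ of the retraction together with the identification $T_{0_p}T_p\mcal \simeq T_p\mcal$. (3) Hence the two chart curves have matching value and first derivative at $h=0$; Taylor's theorem with the $C^2$ regularity of both curves (using $x \in C^2$ by hypothesis and smoothness of $R$ and $\varphi$) gives the $O(h^2)$ bound on the chart difference. (4) Convert back: on a compact neighborhood $K$ of $p$ contained in the chart domain there exist constants such that $d(x,y) \le c\,\|\varphi(x) - \varphi(y)\|$ for $x,y \in K$; for $h$ small enough both $x(\lambda+h)$ and $y_t(h)$ lie in $K$, so $d(x(\lambda+h), y_t(h)) \le \eta_2 h^2$.

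A subtlety to address is uniformity of the constant $\eta_2$ over all $\lambda \in [0,1)$, since the definition of prediction order requires a single $\eta_p$. The second-derivative bounds on $\varphi \circ x$ and on $h \mapsto \varphi(R_p(hv))$, as well as the chart-to-distance comparison constant, depend a priori on $\lambda$ through the base point $p = x(\lambda)$. This is handled by noting that $x([0,1])$ is a compact subset of $\mcal$ (the curve extends to the compact interval $[0,1]$ and is continuous), so it can be covered by finitely many charts, on each of which all relevant quantities — the retraction, its derivatives, $f$ and its derivatives up to second order, and $t(\cdot,\cdot) = -\hessx f^{-1}[\partial_\lambda \grad f]$ which is continuous by Hypothesis~\ref{hyp.hessFullRank} — are bounded; taking the maximum of the finitely many local constants produces a uniform $\eta_2$.

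I expect the main obstacle to be making the second-order Taylor step genuinely rigorous in the manifold setting: one must be careful that the $C^2$ regularity is available for \emph{both} curves in a \emph{common} chart, which requires knowing that $R$ is smooth (true by definition of a retraction, $R \in C^\infty$ or at least $C^2$), that $t(x(\lambda),\lambda)$ depends continuously (indeed $C^1$) on $\lambda$ — which follows from the implicit-function-theorem regularity established in the proof of Theorem~\ref{teo.curveExistence} combined with $f \in C^2$ — and that $x \in C^2$, which is exactly the hypothesis of the lemma. The bookkeeping of the three sources of constants and their uniform control via compactness is the part most likely to require care, but it is routine rather than deep.
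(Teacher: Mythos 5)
Your proposal is correct and follows essentially the same route as the paper's proof: pass to a local chart around $x(\lambda)$, note that the chart representations of the solution curve and of $h\mapsto R_{x(\lambda)}(h\,t(x(\lambda),\lambda))$ agree in value and first derivative at $h=0$ (via the Davidenko equation and the local rigidity of the retraction), apply a second-order Taylor expansion, and convert the resulting $O(h^2)$ chart discrepancy into a bound on $d(x(\lambda+h),y_t(h))$. The only cosmetic differences are that the paper performs the conversion by pulling back the straight chart segment through $\varphi^{-1}$ and bounding its length rather than invoking the local bi-Lipschitz comparison you cite (the same estimate in substance), and that your explicit compactness argument for a $\lambda$-uniform constant $\eta_2$ addresses a point the paper leaves implicit.
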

	\begin{proof}
		We choose $h$ sufficiently small such that $\lambda + h < 1$, and $x(\lambda)$, $x(\lambda+h)$, $R_{x(\lambda)}(h x'(\lambda))$ lie in the same open neighborhood $\mathcal{U}\in\mcal$, corresponding to the local chart $\varphi$.
		We denote the coordinate representations of $x(\lambda + h)$ and $R_{x(\lambda)}(h x'(\lambda))$ by
		\begin{equation*}
			\hat x(h) = \varphi(x(\lambda + h)), \quad \quad \hat r(h) = \varphi(R_{x(\lambda)}(hx'(\lambda))).
		\end{equation*}
		By the smoothness assumptions on  $x$, $\varphi$, and $R$, it follows that $\hat r$ and $\hat x$ are both two times continuously differentiable. This allows us to write their second order Taylor expansion with Lagrange remainder as :
		\begin{align*}
			\hat x(h) &= \hat x(0) + h\hat x'(0) + \frac{h^2}{2}\hat x''(h_x),\\
			\hat r(h) &= \hat r(0) + h\hat r'(0) + \frac{h^2}{2}\hat r''(h_r),
		\end{align*}
		for some $h_x,h_r\in\pa{0,h}$. By the retraction definition, note that $\hat x(0) = \hat r(0) = \varphi(x(\lambda))$ and using the local rigidity property
		\begin{equation*}
			\hat x'(0) = \hat r'(0) = \D\varphi(x(\lambda))\pac{x'(\lambda)}.
		\end{equation*}
		We now define the line 
		\begin{align*}
			\hat e(\tau) &= (1-\tau)\cdot\hat x(h) + \tau\cdot\hat r(h)\\
			&=  \hat x(0) + h\hat x'(0) + \tau \cdot \frac{h^2}{2}\pa{\hat r''(h_r)-\hat x''(h_x)}.
		\end{align*}
		Because $\varphi\pa{\mathcal{U}}$ is open, this line is contained in  $\varphi\pa{\mathcal{U}}$ for sufficiently small $h$.
		This allows us to define   
		\begin{equation*}
			e(\tau) = \varphi^{-1}(\hat e(\tau)),
		\end{equation*}
		which is a smooth curve on $\mcal$ joining  $x(\lambda + h)$ and $R_{x(\lambda)}(hx'(\lambda))$: 
		\begin{equation*}
			e(0) = x(\lambda + h)\quad\quad e(1) = R_{x(\lambda)}(hx'(\lambda)).
		\end{equation*}
		Taking the derivative with respect to $\tau$ we have that 
		\begin{equation*}
			e'(\tau) = \D\varphi^{-1}(\hat e(\tau))\pac{\hat e'(\tau)} = h^2\cdot \D\varphi^{-1}(\hat e(\tau))\pac{\tfrac{1}{2}\pa{\hat r''(h_r)-\hat x''(h_x)}}.
		\end{equation*}
		This concludes the proof by noting that 
		\begin{align*}
			d(R_{x(\lambda)}(hx'(\lambda)),x(\lambda + h))\! &\leq\! \int_{0}^{1}\!\!\!\|e'(\tau)\|\dd\tau \\
			\!	&\leq\! h^2\! \int_{0}^{1}\!\!\!\|\D\varphi^{-1}(\hat e(\tau))\pac{\tfrac{1}{2}\pa{\hat r''(h_r)-\hat x''(h_x)}}\!\|\,\dd \tau = O(h^2).
		\end{align*}
	\end{proof}
	
	\subsection{Step size adaptivity via asymptotic expansion}\label{ss.stepSizeAdaptivity}
	
	The selection of the step size $h_k$ in Algorithm~\ref{alg.riemannianContinuation} is of crucial importance for its efficiency. A good step size selection should find a balance between the two conflicting goals of attaining fast convergence in each correction step and maintaining a low number of correction steps. 
	
	An overview of existing strategies for the Euclidean case can be found in~\cite{allg, deuflhard}. In the following, we focus on the case of tangential prediction. We propose to generalize to the Riemannian setting a step size selection scheme which is summarized in~\cite[section 6.1]{allg}. It aims at guaranteeing the three following conditions: (i) the distance between the prediction point $y_{k+1}$ and the corresponding solution point $x_{k+1}$ is below a prescribed tolerance, (ii) the RN method on $f(\cdot, \lambda_k + h_k)$ started at the prediction point $y_{k+1}$ is sufficiently contractive and (iii) the curvature of the solution curve between $x_k$ and $x_{k+1}$ is below a prescribed tolerance. For the Euclidean case, an analogous approach intended to fulfill condition (ii) is used in the numerical continuation software package HOMPACK~\cite{hompackLibrary}, while the strategy we now describe targets the three above conditions simultaneously. 
	
	Given any $(w,\lambda)\in\mcal\times\pac{0,1}$ such that $\hessx{f}(w,\lambda)$ is full rank, we denote 
	\begin{itemize}
		\item $t(w,\lambda) = -\hessx{f}(w,\lambda)^{-1}\pac{\ddp{\grad{f}(w,\lambda)}{\lambda}}$ : the prediction vector,
		\item $n(w,\lambda) = -\hessx{f}(w,\lambda)^{-1}\pac{\grad{f}(w,\lambda)}$ : the RN update vector.
	\end{itemize}
	Given $(x(\lambda),\lambda)$ on the solution curve, recall the tangential prediction point as a function of step size $h > 0$ is 
	\begin{equation}\label{eq.predictionPath}
		y( h) = R_{x(\lambda)}( h t(x(\lambda),\lambda)).
	\end{equation}
	An approximation of the distance between $y(h)$ and $x(\lambda + h)$ can be obtained from the norm of the first RN update vector. We shall denote it
	\begin{equation}\label{eq.delta}
		\delta(x(\lambda), \lambda, h) := \|n(y(h),\lambda + h)\|.
	\end{equation} 
	If we let $z(h) = R_{y(h)}\pa{n(y(h), \lambda + h)}$ indicate the first iterate of the RN method, the first contraction rate of the RN is defined as
	\begin{equation}\label{eq.kappa}
		\kappa(x(\lambda), \lambda, h) := \frac{\|n(z(h),\lambda + h)\|}{\|n(y(h),\lambda + h)\|}.
	\end{equation}
	Upon convergence of the RN method for $f(\cdot, \lambda + h)$, this ratio is smaller than 1. Finally, the curvature of the solution curve between two points $x(\lambda)$ and $x(\lambda + h)$ can be approximated with
	\begin{equation}\label{eq.alpha}
		\alpha(x(\lambda), \lambda, h) := \acos\pa{\scalp{\frac{t(x(\lambda),\lambda)}{\| t(x(\lambda),\lambda)\|}}{\frac{\tcal_{y(h)\to x(\lambda)}( t(y(h),\lambda+ h))}{\|\tcal_{y(h)\to x(\lambda)}( t(y(h),\lambda+ h))\|}}_{x(\lambda)}},
	\end{equation}
	the angle between the prediction vector at the solution curve point $(x(\lambda),\lambda))$ and the prediction vector at the prediction point $(y(h),\lambda + h)$. In order to measure their relative angle we transport ${t(y(h),\lambda+ h)\in T_{y(h)}\mcal}$ to ${T_{x(\lambda)}\mcal}$ using a linear map ${\tcal_{y(h)\to x} : T_{y(h)}\mcal\to T_{x(h)}\mcal}$ which can be either parallel transport along the prediction curve $y(h)$ or, more generally, a transporter~\cite[Definition 10.61]{boumalBook}. Note that~\eqref{eq.alpha} is well defined only if $t(x,\lambda)\ne 0$, which also guarantees $t(y(h),\lambda+ h))$ is non zero for sufficiently small $h$.
	
	The following lemma inspired by~\cite[Lemmas 6.1.2, 6.1.8]{allg} is the cornerstone of the step selection strategy. It provides a Taylor expansion with respect to $h$ around $h=0$ of the indicators~\eqref{eq.delta},~\eqref{eq.kappa},~\eqref{eq.alpha}.
	\begin{lemma}\label{lem.asymptExpansion}
		Assume $f\in C^4$. If for each $(x,\lambda)$ of the solution curve we have
		\begin{equation}\label{eq.nonDegeneracy}
			\Dndtz{n(y(h),\lambda+h)}{h}{2}\ne 0,
		\end{equation}
		where $\Dndt{}{h}{2}$ denote the second covariant derivative along the prediction path~\eqref{eq.predictionPath}. Then there exist functions $\delta_2(x, \lambda)$, $\kappa_2(x, \lambda)$,  $\alpha_1(x, \lambda)$ only depending on $x$ and $\lambda$ such that the following holds:
		\begin{enumerate}[label=(\roman*)]
			\item The norm of the first Newton update vector $\delta(x, \lambda, h) = \|n(y(h),\lambda + h)\|$ verifies
			\begin{equation*}
				\delta(x, \lambda, h) = \delta_2(x, \lambda)h^2 + \Ocal{h^3}.
			\end{equation*} 
			\item Newton's method contraction rate $\kappa(x,h) = \dfrac{\|n(z(h),\lambda + h)\|}{\|n(y(h),\lambda + h)\|}$ verifies 
			\begin{equation*}
				\kappa(x, \lambda, h) = \kappa_2(x, \lambda)h^2 + \ocal{h^2}.
			\end{equation*} 
			\item If $t(x,\lambda) \ne 0$, the prediction angle $$\alpha(x,h) = \acos\pa{\scalp{\frac{t(x,\lambda)}{\| t(x,\lambda)\|}}{\frac{\tcal_{y(h)\to x}( t(y(h),\lambda+ h))}{\|\tcal_{y(h)\to x}( t(y(h),\lambda+ h))\|}}_{x(\lambda)}}$$ is well defined, and provided that
			\begin{equation}\label{eq:missingAssumption}
				\Ddtz{\tcal_{y(h)\to x}( t(y(h),\lambda+ h))}{h} \ne c t(x,\lambda),\quad\forall\,c\in\Rbb,
			\end{equation}			
			it verifies
			\begin{equation*}
				\alpha(x, \lambda, h) = \alpha_1(x, \lambda)h + \Ocal{h^2}.
			\end{equation*} 
			
		\end{enumerate}
	\end{lemma}
	The proof of Lemma~\ref{lem.asymptExpansion} can be found in appendix~\ref{a:proofLemma3}. We now describe the step size selection strategy inspired by this result. Given positive constants $\delta_{\max}$, $\kappa_{\max}$ and $\alpha_{\max}$, we aim at finding the largest $h_k>0$ such that 
	\[
	\delta(x_k,\lambda_k, h_k) \leq \delta_{\max},\quad
	\kappa(x_k,\lambda_k, h_k) \leq \kappa_{\max},\quad 
	\alpha(x_k,\lambda_k, h_k) \leq \alpha_{\max}.
	\]
	Given a trial step size $\tilde h_k$ (obtained, e.g., from the previous step), Lemma~\ref{lem.asymptExpansion} allows us to estimate 
	\begin{align}
		\label{eq.deltaTilde}\delta_2(x_k,\lambda_k) \simeq\tilde \delta_2(x_k, \lambda_k) := \sqrt{\frac{\delta(x_k, \lambda_k,\tilde h_k)}{\tilde h_k^2}},\\
		\label{eq.kappaTilde}\kappa_2(x_k,\lambda_k) \simeq\tilde \kappa_2(x_k, \lambda_k) = \sqrt{\frac{\kappa(x_k, \lambda_k,\tilde h_k)}{\tilde h_k^2}}.\\
		\label{eq.alphaTilde}\alpha_1(x_k,\lambda_k) \simeq\tilde \alpha_1(x_k, \lambda_k) = \frac{\alpha(x_k, \lambda_k,\tilde h_k)}{\tilde h_k},
	\end{align} 
	Then, imposing
	\begin{align*}
		\tilde \delta_2(x_k, \lambda_k) h_k^2 \leq \delta_{\max},\quad 
		\tilde \kappa_2(x_k, \lambda_k) h_k^2 \leq \kappa_{\max},\quad
		\tilde \alpha_1(x_k, \lambda_k) h_k \leq \alpha_{\max},
	\end{align*}
	yields 
	\begin{equation*}
		h_k \leq \tilde h_k \min\paa{ \sqrt{\frac{\delta_{\max}}{\tilde\delta(x_k,\lambda_k)}}, \sqrt{\frac{\kappa_{\max}}{\tilde\kappa(x_k,\lambda_k)}}, \frac{\alpha_{\max}}{\tilde\alpha(x_k,\lambda_k)}}.
	\end{equation*}
	This is the criterion to adjust step size, but not to make too drastic changes in the step size, the increase is limited to a factor of 2 and the decrease to a factor $\frac{1}{2}$. The resulting procedure is summarized in Algorithm~\ref{alg.stepSizeAdaptivity}. Note that this comes at the non-negligible cost of (approximately) solving 3 extra linear systems involving the Riemannian Hessian.
	\begin{algorithm}
		\caption{\aheader{ h_{k}}{AdaptiveStepSize}{\tilde h_k,x_k,\lambda_k,t(x_k,\lambda_k),f, \alpha_{\max},\delta_{\max},\kappa_{\max}}}
		\label{alg.stepSizeAdaptivity}
		\begin{algorithmic}[1]
			\State $y_k = R_{x_{k}}(\tilde h_k t(x_k,\lambda_k))$;
			\State Solve $ \hessx f(y_k,\lambda_k+\tilde h_k)\pac{t(y_k,\lambda + \tilde h_k)} = - \ddp{\grad f(y_k,\lambda_k + \tilde h_k)}{\lambda}  $;
			\State Solve $\hessx{f}(y_k,\lambda_{k} + \tilde h_k)\pac{n(y_k,\lambda + \tilde h_k)} = -\grad{f}(y_{k},\lambda_k + \tilde h_k)$;
			\State $z_k = R_{y_k}(n(y_k,\lambda + \tilde h_k))$;
			\State Solve $\hessx{f}(z_k,\lambda_{k} + \tilde h_k)\pac{n(z_k, \lambda + \tilde h_k)} = -\grad{f}(z_{k},\lambda_k + \tilde h_k)$;
			\State Compute $\tilde \delta_2$, $\tilde \kappa_2$ and $\tilde \alpha_1$ using~\eqref{eq.deltaTilde},~\eqref{eq.kappaTilde} and~\eqref{eq.alphaTilde}. 
			\State $h_{k} = \tilde h_k\max\paa{\frac{1}{2},\min\paa{ \sqrt{\dfrac{\delta_{\max}}{\tilde\delta_2}}, \sqrt{\dfrac{\kappa_{\max}}{\tilde\kappa_2}}, \dfrac{\alpha_{\max}}{\tilde\alpha_1}, 2}}$;
			\State \textbf{return} $h_{k}$
		\end{algorithmic}
	\end{algorithm}
	\section{Application to the Karcher mean of symmetric positive definite matrices}\label{s.karcherMean}
	
	In this section, we apply RNC, Algorithm~\ref{alg.riemannianContinuation}, to a classical problem of Riemannian optimization: the computation of the  Karcher mean, also referred to as Riemannian center of mass~\cite{karchermeanfirstpaper}.
	Given $K$ points $p_1,\dots,p_K\in\mcal$ the Karcher mean (with uniform weights) is defined as 
	\begin{equation} \label{eq:karcher}
		\underset{q\in\mcal}{\arg\min}\paa{\sum_{i=1}^{K}d(q,p_i)^2},
	\end{equation}
	where $d(q,p_i)$ is the distance function on $\mcal$.
	This optimization problem admits a unique solution for any manifold provided that all $p_i$ are sufficiently close to each other. This requirement can be dropped for instance in the case of complete Riemannian manifolds of non-positive sectional curvature, also called Cartan-Hadamard manifolds, for which the Karcher mean is always uniquely defined for any set of points~\cite{karcherMeanUniqueness}.
	
	We will focus on the Karcher mean of $n\times n$ real symmetric positive definite matrices
	\begin{equation*}
		\SPD{n} = \paa{A\in \Rbb^{n\times n}\,:\,A = A^T,\, v^TAv>0\,\forall v\in\Rbb^n\text{ if } v\ne0}.
	\end{equation*} 
	In the following, we recall facts from~\cite{bhatia} on a suitably chosen Riemannian manifold structure of $\SPD{n}$.
	
	Clearly, $\SPD{n}$ is an open cone of the vector space of symmetric matrices $\SYM{n}$. The tangent space at $A\in\SPD{n}$ can be identified with this vector space:
	\begin{equation*}
		T_A\SPD{n} \simeq \SYM{n}.
	\end{equation*} 
	The Thompson or statistical metric
	makes $\SPD{n}$ a Cartan-Hadamard manifold; it has the following expression
	\begin{equation}
		\scalp{V}{W}_A = \trace\pa{A^{-1}VA^{-1}W},\quad \forall \,V,W \in T_A\SPD{n}\simeq\SYM{n}.
	\end{equation} 
	With this metric, a geodesic joining ${A,B\in \SPD{n}}$ is given by
	\begin{equation}\label{eq.SPDgeodesic}
		\gamma_{AB}(t) = A\exp(t\log(A^{-1}B)),
	\end{equation}
	where $\exp$ and $\log$ are the matrix exponential and logarithm. In turn, the distance function reads as
	$
	d(A,B) = \|\log(A^{-\frac{1}{2}}BA^{-\frac{1}{2}})\|_F
	$
	and the Karcher mean problem~\eqref{eq:karcher} becomes
	\[
	\underset{X\in \SPD{n}}{\arg\min} f(X), \qquad f(X):= \sum_{i = 1}^{K}\|\log(A_i^{-\frac{1}{2}}XA_i^{-\frac{1}{2}})\|_F^2,
	\]
	with $A_1,\dots,A_K \in \SPD{n}$.
	
	An expression for the Riemannian gradient $f$ and for the Riemannian Hessian of $f$ associated to the Levi-Civita connection compatible with the Thompson metric can be found in~\cite[Equations 4.6 and 4.16]{surveyGeometricMean}. For numerical experiments, we consider the second order retraction~\cite[Equation 4.10]{surveyGeometricMean} and the transporter given by parallel transport along geodesics~\cite[Equation 3.4]{vecTranspSPDPaper}.

	\subsection{Homotopy for the Karcher mean problem}
	The Riemannian manifold structure for $\SPD{n}$ introduced in the previous section, makes the Karcher mean of positive definite matrices a strictly geodesically convex problem. 
	This implies that standard Riemannian optimization algorithms can successfully solve the problem without the need of numerical continuation. Nevertheless, we use this application as a model problem for the purpose of testing the RNC algorithm and illustrating its behavior.
	
	We propose the following homotopy for the Karcher mean of $A_1,\dots,A_K \in \SPD{n}$.
	We define $K$ smooth curves $B_i\::\:\pac{0,1}\to\SPD{n}$ such that 
	\begin{equation*}
		B_i(1) = A_i,\quad\forall i = 1, \dots, K,
	\end{equation*}
	and such that the Karcher mean of $B_1(0),\dots,B_K(0)$ can be solved easily. In particular, this is the case when all starting points are equal, $B_1(0) = \cdots = B_K(0) = A_0$. In our experiments, we choose $A_0 = I_{n \times n}$. For $B_i$, we choose the geodesic curve~\eqref{eq.SPDgeodesic} joining $A_0$ to $A_i$, that is, 
	\begin{equation*}
		B_i(\lambda) = A_0 \exp(\lambda \log(A_0^{-1}A_i)).
	\end{equation*}
	We can now write the parametric Karcher mean optimization problem as 
	\begin{equation}\label{eq.KarcherMeanHomotopy}
		\underset{X\in\SPD{n}}{\arg\min}\paa{f(X,\lambda) = \sum_{i = 1}^{K}\|\log(B_i(\lambda)^{-\frac{1}{2}}XB_i(\lambda)^{-\frac{1}{2}})\|_F^2},\quad \forall\lambda\in\pac{0,1}.
	\end{equation}

	Using the parameter dependent expression of the Riemannian gradient of \eqref{eq.KarcherMeanHomotopy}, its derivative with respect to the parameter $\lambda$, needed for performing tangential continuation, is given by 
	\begin{equation*}
		\ddp{\grad f(X,\lambda)}{\lambda} = - 2 \sum_{i = 1}^K X\D\log(X^{-1}B_i(\lambda))\pac{X^{-1}B_i'(\lambda)},
	\end{equation*}
	where $B_i^\prime(\lambda) = A_0 \exp(\lambda \log(A_0^{-1}A_i))\log(A_0^{-1}A_i)$ and  $\D\hspace{0.02cm}\log(X)[\cdot]$ is the Fréchet derivative of the matrix logarithm; see~\cite{dlog} for its computation.

	\subsection{Numerical results}
	
	All numerical experiments presented in this paper have been performed in Matlab 2019b, using the Matlab Riemannian optimization library Manopt~\cite{manoptPaper}.
	
	In all experiments, we consider computing the Karcher mean for a set of $K = 75$ symmetric positive definite matrices of size $n = 10$ that are built from their eigenvalue decomposition 
	\begin{equation*}
		A_i = V_i D_i V_i^T,\quad \forall i = 1,\dots, K,
	\end{equation*}
	where $V_i$ is a random orthogonal matrix and $D_i$ a diagonal matrix. For the diagonal entries, 9 are chosen at random in the interval $\pac{1,2}$ and the last one is chosen such that the matrices have a large but still moderate condition number (approximately $10^3$). Figure~\ref{fig.KarcherMeanEasyInstance} compares the direct optimization with the standard RN method and the continuation approach (tangential RNC with fixed step size $N_{\text{steps}} = 3$) using the homotopy~\eqref{eq.KarcherMeanHomotopy}. For all experiments, we used the  identity matrix as initial condition, $\operatorname{tol} = 10^{-6}$ and $\operatorname{N_{\text{inner}}} = 5000$. Note that other choices, like the planar approximations of the Karcher mean discussed in~\cite{surveyGeometricMean}, are possible. For this example, it turns out that the RN method enters 
	a superlinear convergence regime from the beginning (as seen from the concavity of the black convergence curve) and thus solves the problem in very few iterations. For such a simple instance, the continuation approach does not offer advantages.
	\begin{figure}
		\centering
		\includegraphics[trim=4.1cm 0cm 4.1cm 0,clip,width=0.85\textwidth]{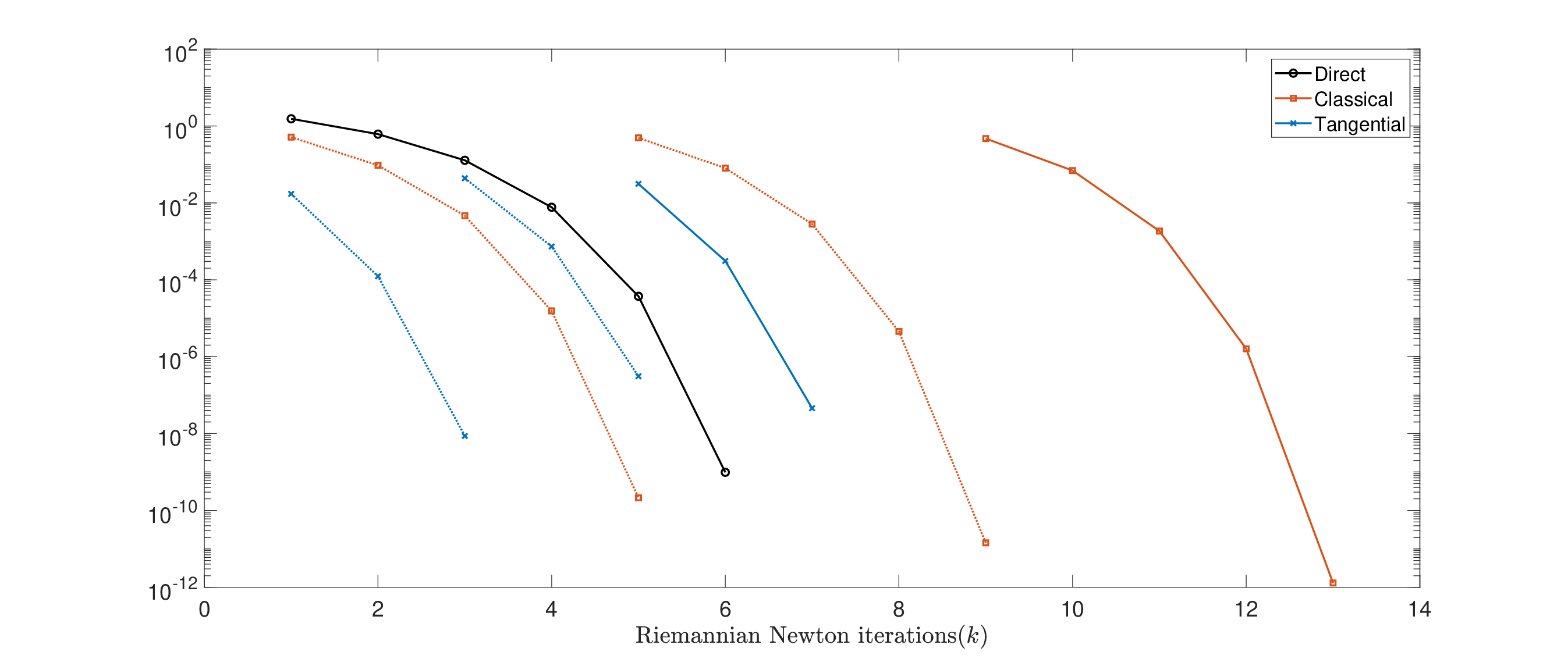}
		\caption{\footnotesize
			Convergence of the Riemannian gradient norm versus RN iterations for a non-pathological instance of the Karcher mean problem. The iterations needed by the (plain) RN method is compared to the total number of RN correction steps needed by fixed step size classical and tangential prediction RNC ($N_{\text{steps}} = 3$). The Riemannian gradient norm for $\lambda = 1$ is plotted with solid lines, whereas we use dashed lines for intermediate values of $\lambda$. }
		\label{fig.KarcherMeanEasyInstance}
	\end{figure}
	
	In order to better highlight the advantage of the RNC algorithm, we choose a somewhat pathological instance: the diagonal matrices $D_i$ are chosen such that their condition number is $10^8$. Half of the diagonal entries are exponentially distributed in $\pac{0.1, 1}$ and the other half exponentially distributed in $\pac{10^6, 10^7}$. In turn, the optimization problem is highly ill-conditioned, leading to stagnation in the initial phase of the RN method; see Figure~\ref{fig.KarcherMeanFixedStepSize}. In contrast, the RNC algorithm~\ref{alg.riemannianContinuation} with fixed number of steps $N_{\text{steps}} = 2$ does not suffer from such stagnation during the correction phase. In turn, the total number of RN iterations is reduced. Tangential prediction leads to slightly better convergence compared to classical prediction, but it also comes at the cost of solving an extra linear system, which leads to a less favorable computational time; see Table~\ref{tab.summaryKarcherMeanExperiments}. The number of fixed steps in Figure~\ref{fig.KarcherMeanFixedStepSize} is chosen to best highlight the slight improvement of RNC over direct RN optimization. However, for this particular application the advantage disappears when an automatic step sizing strategy is used. Nevertheless, the step size adaptivity results for different set of hyperparameters $(\kappa_{\max}, \alpha_{\max}, \delta_{\max})$ in Table~\ref{tab.summaryKarcherMeanExperiments} illustrate the need for a compromise to be found between the number of corrections and the length of each correction. This is further demonstrated by Figure~\ref{fig.KarcherMeanVaryingNSteps} where the computational effort for fixed step size RNC is reported for different values of $N_{\mathrm{steps}}$. 
	\begin{figure}
		\centering
		\includegraphics[width=0.85\textwidth]{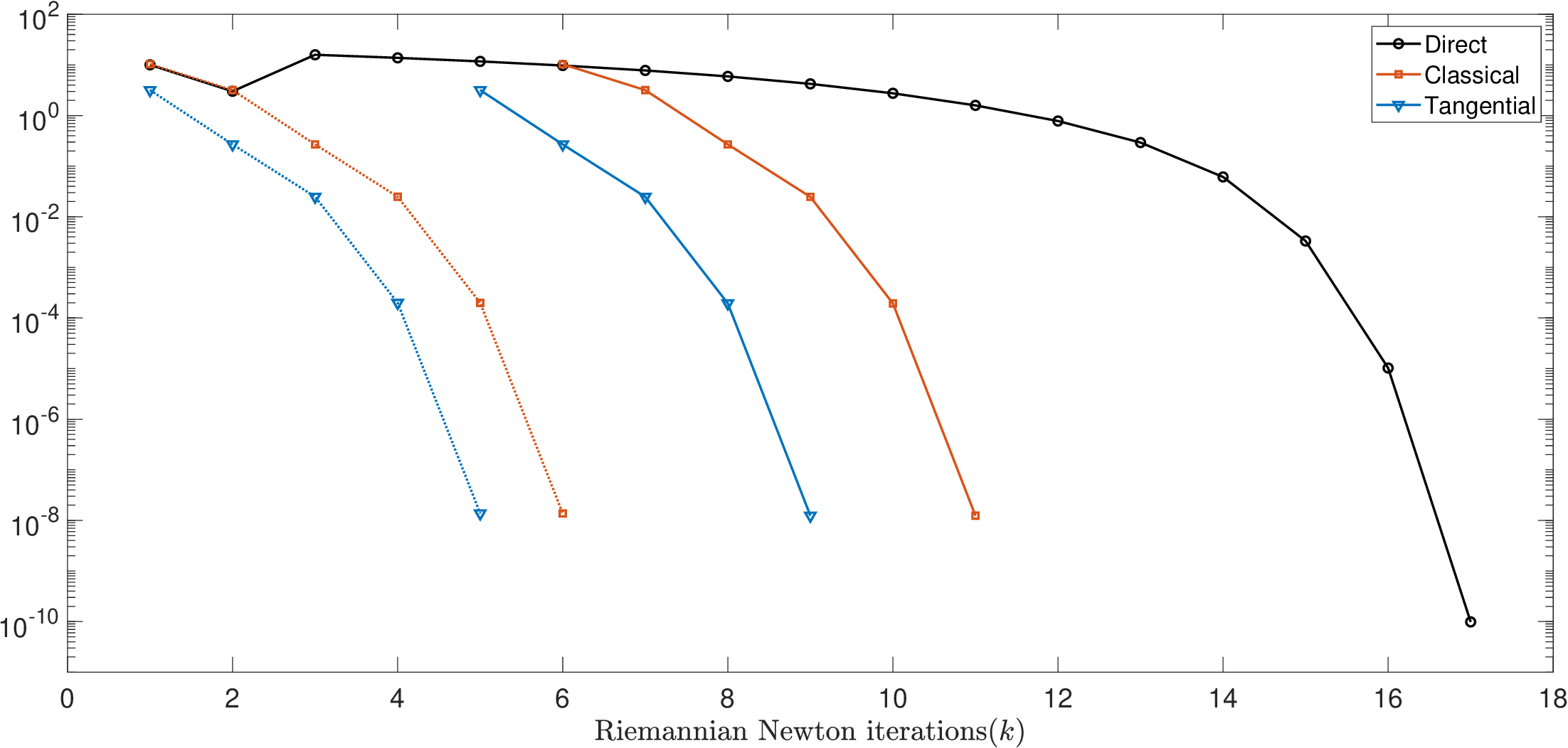}
		\caption{\footnotesize 
			Convergence of the Riemannian gradient norm (of the original problem in solid lines and of each intermediate problem in dashed lines) versus RN iterations for the pathological instance of the Karcher mean problem. RN method is compared with fixed step size classical and tangential prediction RNC algorithm ($N_{\text{steps}} = 2$).}
		\label{fig.KarcherMeanFixedStepSize}
	\end{figure}
	\begin{figure}\label{fig.KarcherMeanVaryingNSteps}
		\centering
		\begin{minipage}{0.495\textwidth}
			\includegraphics[trim=0.75cm 0cm 1.75cm 0.5cm,clip,width=\textwidth]{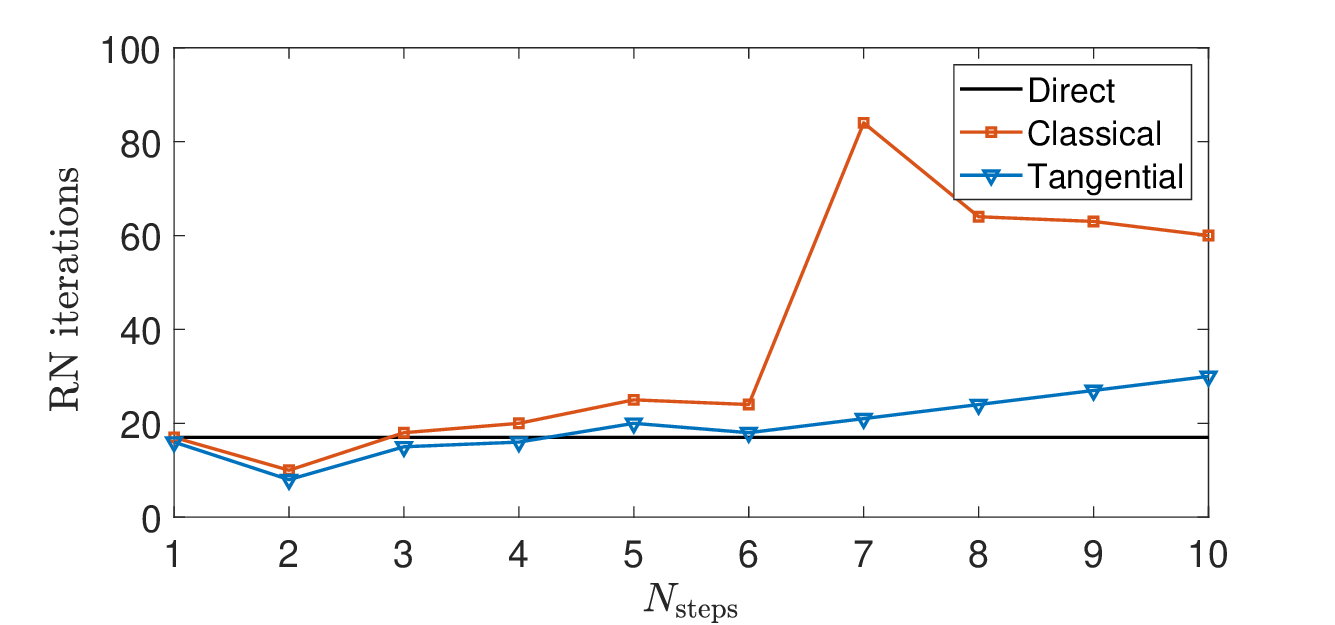}
		\end{minipage}
		\begin{minipage}{0.495\textwidth}
			\includegraphics[trim=0.75cm 0cm 1.75cm 0.5cm,clip,width=\textwidth]{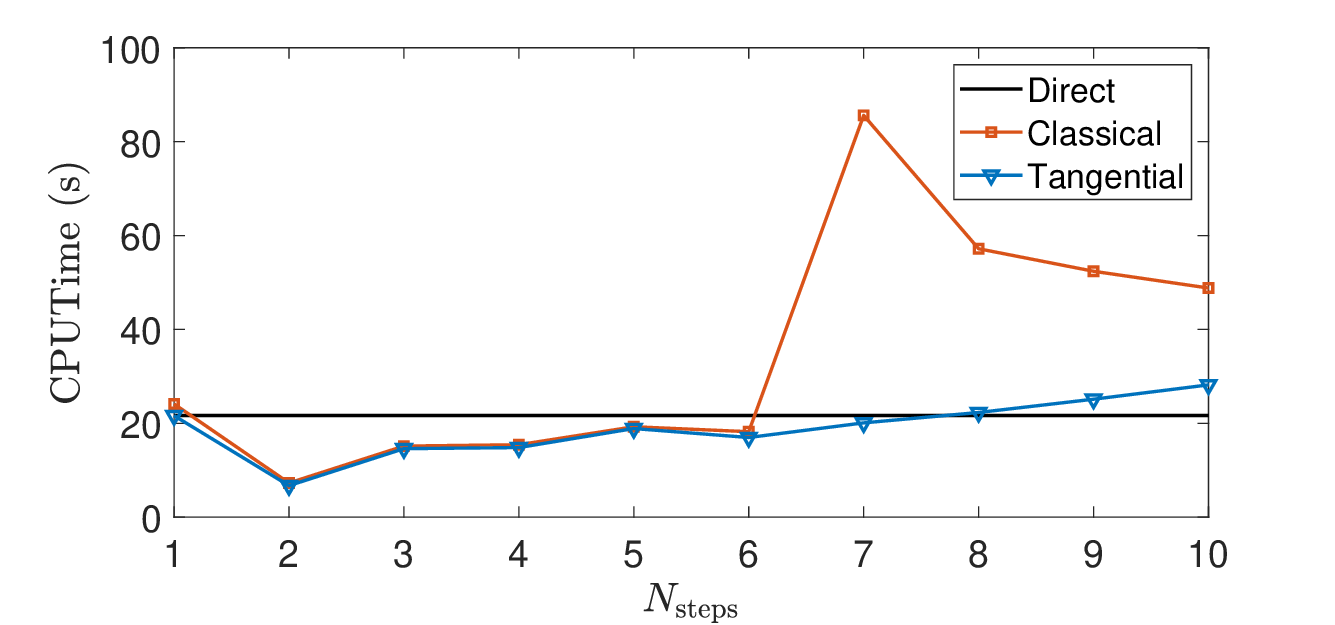}
		\end{minipage}
		\caption{\footnotesize
				RN iterations (left) and computation time (right) versus the number of continuation steps for the fixed step size RNC on the pathological instance of the Karcher mean problem.}
	\end{figure}
	\begin{table}
		\centering
		\caption{\footnotesize Summary of the number of iterations and computation times for the numerical experiments on the Karcher mean pathological instance. The hyperparameters $(\kappa_{\max}, \alpha_{\max}, \delta_{\max})$ for the step size adaptive experiments (1), (2) and (3) are respectively $(0.6, 3^\circ, 10)$, $(0.3, 1.5^\circ, 5)$ and $(0.15, 0.75^\circ, 2.5)$.}
		\begin{tabular}{c|c|c|c|}
			\cline{2-4}
			& \multicolumn{3}{c|}{\textbf{Karcher mean}} \\ \hline
			\multicolumn{1}{|c|}{Direct Optimization (RN)} & 1 & 17 & 20.04 \\ \hline
			\multicolumn{1}{|c|}{Fixed step size classical RNC} & 2 & 11 & 6.65 \\ \hline
			\multicolumn{1}{|c|}{Fixed step size tangential RNC} & 2 & 9 & 6.32 \\ \hline
			\multicolumn{1}{|c|}{Step size adaptive RNC (1)} & 3 & 22 & 45.66 \\ \hline
			\multicolumn{1}{|c|}{Step size adaptive RNC (2)} & 3 & 17 & 32.36 \\ \hline
			\multicolumn{1}{|c|}{Step size adaptive RNC (3)} & 6 & 25 & 56.96 \\ \hline
			& Corrections & Correction iterations & Time (s) \\ \cline{2-4} 
		\end{tabular}
		\label{tab.summaryKarcherMeanExperiments}
	\end{table}
	\section{Application to low-rank matrix completion}\label{s.matrixCompletion}
	In matrix completion, only some entries of a matrix $A \in\Rbb^{m\times n}$ are available and the goal is to determine the rest of the entries. This is clearly an ill-posed problem and one way to regularize it is to impose low-rank constraints; see \cite{matrixCompletionSurvey} for a recent review on existing methods.
	In the following, we describe the Riemannian optimization setting introduced by~\cite{bart}. 
	
	We let ${\Omega \subset \paa{1,\dots,m}\times\paa{1,\dots,n}}$ contain the indices $(i,j)$ for which $A_{ij}$ is known and define the projection 	
	\begin{equation}
		P_\Omega(A)=
		\begin{cases}
			A_{ij} & \text{if }\pa{i,j}\in\Omega\\
			0 & \text{if }\pa{i,j}\notin\Omega.
		\end{cases}
	\end{equation}
	We aim at approximating $A$ by a matrix of a given fixed rank $k\ll \min\{m,n\}$ or, equivalently, by a matrix from the set
	\begin{equation*}
		\mcal_k = \paa{U\Sigma V^T\!\in\!\matr{m}{n}\! : U\!\in\! \operatorname{St}(m,k),V\!\in\!\operatorname{St}(n,k), \Sigma = \operatorname{diag}(\sigma_i), \sigma_1\geq\dots\geq\sigma_k>0},
	\end{equation*}  
	where $\operatorname{St}(m,k) = \paa{U\in \matr{m}{k}:U^TU = I_k}$ is the Stiefel manifold. It can be shown that $\mcal_k$ is a smooth manifold of dimension $k(m+n-k)$. This leads to the following smooth Riemannian optimization formulation:
	\begin{equation}\label{eq:RiemannianCompletionProblem}
		\underset{X\in\mcal_k}{\min} f(X), \quad f(X) := \frac{1}{2}\|P_{\Omega}(X)-A_\Omega\|_F^2,
	\end{equation}
	with $A_\Omega = P_\Omega(A)$.

	The fixed rank manifold $\mcal_k$ is endowed the standard structure of Riemannian submanifold of $\matr{m}{n}$ as presented in \cite[Section 2]{bart}. The expressions for the Riemannian gradient and the Riemannian Hessian are given in \cite[Equation 11 and Proposition 2.2]{bart}. For the numerical experiments, we opted for the metric projection retraction \cite[Equation 13]{bart} and the orthogonal projection to the destination tangent space \cite[Equation 14]{bart} for the transporter.

	\subsection{Homotopy for the matrix completion}\label{ss.homotopyCurveMatrixCompletion}
	
	The homotopy we propose for the matrix completion problem consists of replacing $A_\Omega$ in~\eqref{eq:RiemannianCompletionProblem} with a smooth curve  $B_\Omega(\lambda)\in\matr{m}{n}$, $\lambda\in\pac{0,1}$, such that $B_\Omega(1) = A_\Omega$. If we take $B_\Omega(0) = P_\Omega(A_0)$, for some known matrix $A_0$ of rank $k$, then the first point of the continuation solution curve is $A_0$ itself. If we let $\pi:\matr{m}{n}\to\mcal_k$ denote the rank-$k$ truncated singular value decomposition, we use $A_0 = \pi\pa{\mathcal{F}(A_\Omega)}$, where $\mathcal{F}$ does not alter the known entries of $A_\Omega$ and imputes the unknown entries via a heuristic procedure. For example, it is common to use zeros for the unknown entries when initializing Riemannian optimization applied to~\eqref{eq:RiemannianCompletionProblem} \cite{matrixCompletionSurvey, boumalOnMatrixCompletion}. In our experiments, we found it more effective to replace missing entries by averaging neighboring known values.
	
	The parametric matrix completion problem is given by
	\begin{equation}\label{eq.matrixCompletionHomotopy}
		\underset{X\in\mcal_{k}}{\min}\paa{f(X,\lambda) = \frac{1}{2}\|P_{\Omega}(X)-B_{\Omega}(\lambda)\|_F^2},\quad \forall\lambda\in\pac{0,1},
	\end{equation}
	with 
	\begin{equation}\label{eq.matrixCompletionInstanceCurve}
		B_{\Omega}(\lambda) = (1-\lambda)P_\Omega(\pi(\mathcal{F}(A_{\Omega}))) + \lambda A_\Omega.
	\end{equation}
	
	From the parameter dependent expression of the Riemannian gradient of \eqref{eq.matrixCompletionHomotopy}, the linearity of $P_\Omega$ and of the tangent space projection $\Pi(X):\matr{m}{n}\to T_X\mcal_{k}$, we obtain
	\begin{equation}
		\ddp{\grad{f}(X,\lambda)}{\lambda} = \Pi(X) \pa{A_\Omega - P_\Omega(\pi(\mathcal{F}(A_{\Omega})))}.
	\end{equation}

	\subsection{Numerical results}
	We apply the RNC Algorithm to an instance of the matrix completion problem where the matrix $A$ is obtained by sampling a bivariate smooth function $g$ on a regular grid of ${\pac{a,b}\times\pac{c,d}}$,
	\begin{equation*}
		A_{i,j} = g\pa{a + i\frac{(b-a)}{m-1}, c + j\frac{(d-c)}{n-1}}, \quad \forall\, i = 0,\dots, m-1, \forall \,j = 1,\dots, n-1.
	\end{equation*}
	We then set $A_\Omega = P_\Omega(A)$, with a randomly generated observation operator $P_\Omega$. We choose the number of known entries accordingly with the rank chosen for $\mcal_k$ using the oversampling rate defined as  
	\begin{equation*}
		\operatorname{OS} = \frac{|\Omega|}{\operatorname{dim}(\mcal_k)} = \frac{|\Omega|}{k(m+n-k)},
	\end{equation*} 
	where $|\Omega|$ is the cardinality of $\Omega$. 
	The matrix $A$ is known to exhibit exponentially decaying singular values, which -- as we will see -- deteriorates the convergence of direct Riemannian optimization methods for~\eqref{eq:RiemannianCompletionProblem}. In particular, we consider the function 
	\begin{equation*}
		g(x,y) = e^{-\frac{(x-y)^2}{\sigma}}
	\end{equation*}
	with $\sigma = 0.1$.
	This function is sampled on $\pac{-1,1}^2$ with a regular grid of $m = n = 300$ points in each direction. We choose the rank $k = 15$ and set $\operatorname{OS} = 3$, implying that $29.25\%$ of the entries are observed.
	
	As the standard RN method tends to fail for this kind of problems, we substituted it with the Riemannian Trust Region algorithm (RTR), both as a corrector at line 11 of algorithm \ref{alg.riemannianContinuation} and as a direct optimization scheme.
	
	\begin{figure}
		\centering
		\includegraphics[trim=4cm .25cm 3.5cm 0,clip,width=0.85\textwidth]{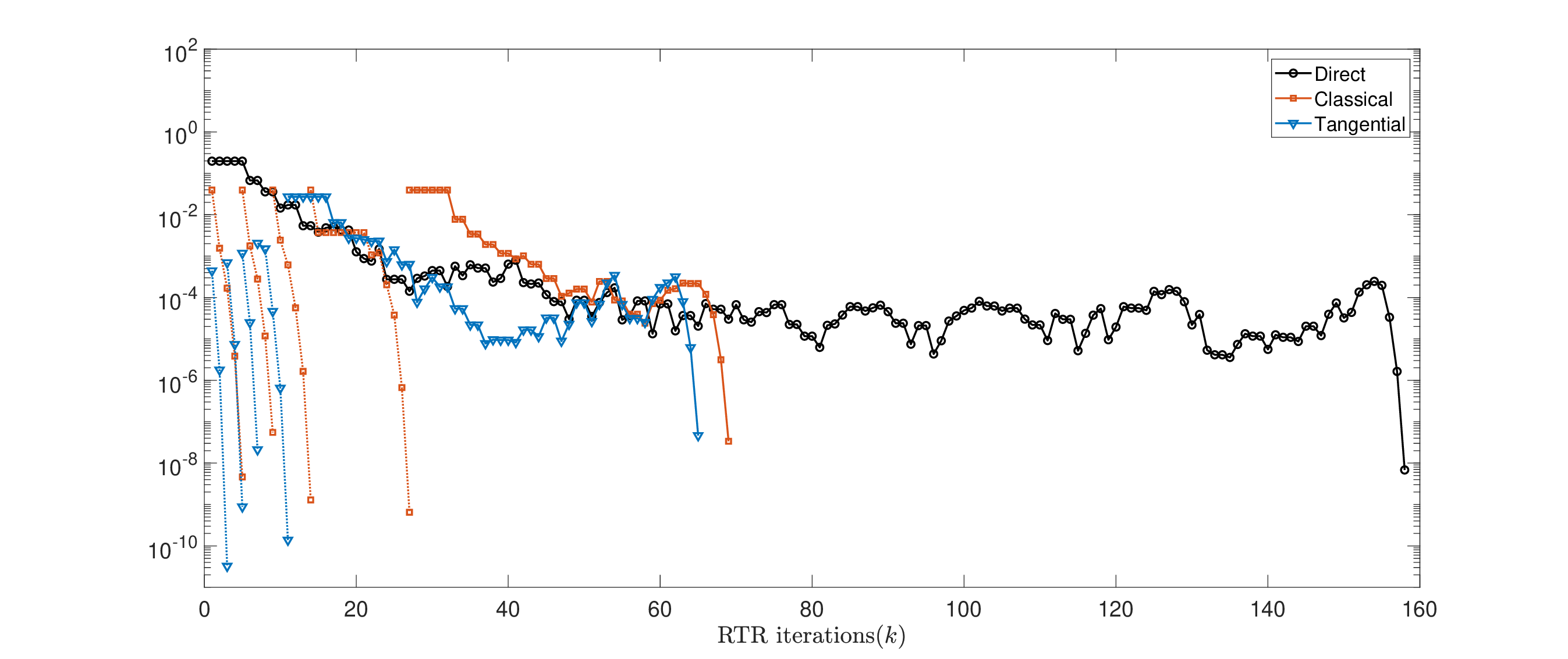}
		\caption{\footnotesize Convergence of the Riemannian gradient norm (of the original problem in solid lines and of each intermediate problem in dashed lines) versus RTR iterations on the matrix completion problem. We compare (plain) RTR optimization initialized at $A_0$ with fixed step size classical and tangential prediction RNC algorithm ($N_{\text{steps}} = 5$) on the matrix completion problem. }
		\label{fig.MatrixCompletionFixedStepSize}
	\end{figure}
	The results of the direct optimization with RTR initialized at $A_0$ compared with fixed step size continuation $N_{\mathrm{steps}} = 5$ on the homotopy using the instance curve~\eqref{eq.matrixCompletionInstanceCurve}  can be seen in Figure~\ref{fig.MatrixCompletionFixedStepSize}. For all experiments we set $\operatorname{tol} = 10^{-7}$ and $N_{\text{inner}} = 5000$. The direct method suffers a long stagnation before entering the superlinear convergence regime. The same stagnation occurs in the last corrections of the continuation procedures, yet less severely and thus the continuation scheme showed to be globally faster both in number of RTR iterations and computation time as summarized in Table~\ref{tab.summaryMatrixCompletionExperiments}. The table also report experiments conducted with two other widely used methods for low-rank matrix completion, namely the Riemannian Conjugate Gradient, referred to as LRGeomCG~\cite{bart}, and the alternating least-squares approach LMAFit~\cite{lmafit}. To make a fair comparison, both use the same initial condition $A_0$ and the stopping criterion is based on the final relative residual on the known entries that the direct RTR method achieves. In Figure~\ref{fig.MatrixCompletionVaryingNSteps}, the best compromise in terms of computation time of fixed step size RNC between the number of continuation steps and the number of steps of each correction is found to be for $N_{\mathrm{steps}} = 3$. If we increase the number of continuation steps, convergence on each correction requires less steps so the total number of RTR does not increase significantly, however the computation time increases due to the fixed costs of each correction.
	\begin{figure}\label{fig.MatrixCompletionVaryingNSteps}
		\centering
		\begin{minipage}{0.495\textwidth}
			\includegraphics[trim=0.75cm 0cm 1.75cm 0.5cm,clip,width=\textwidth]{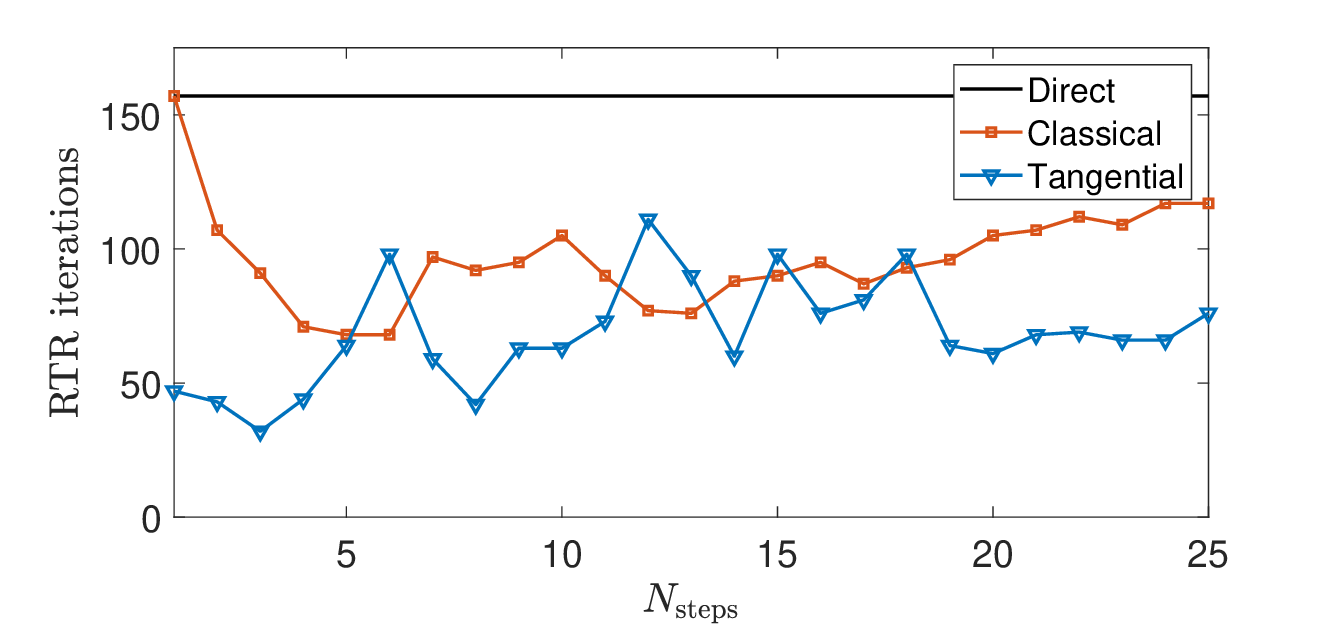}
		\end{minipage}
		\begin{minipage}{0.495\textwidth}
			\includegraphics[trim=0.75cm 0cm 1.75cm 0.5cm,clip,width=\textwidth]{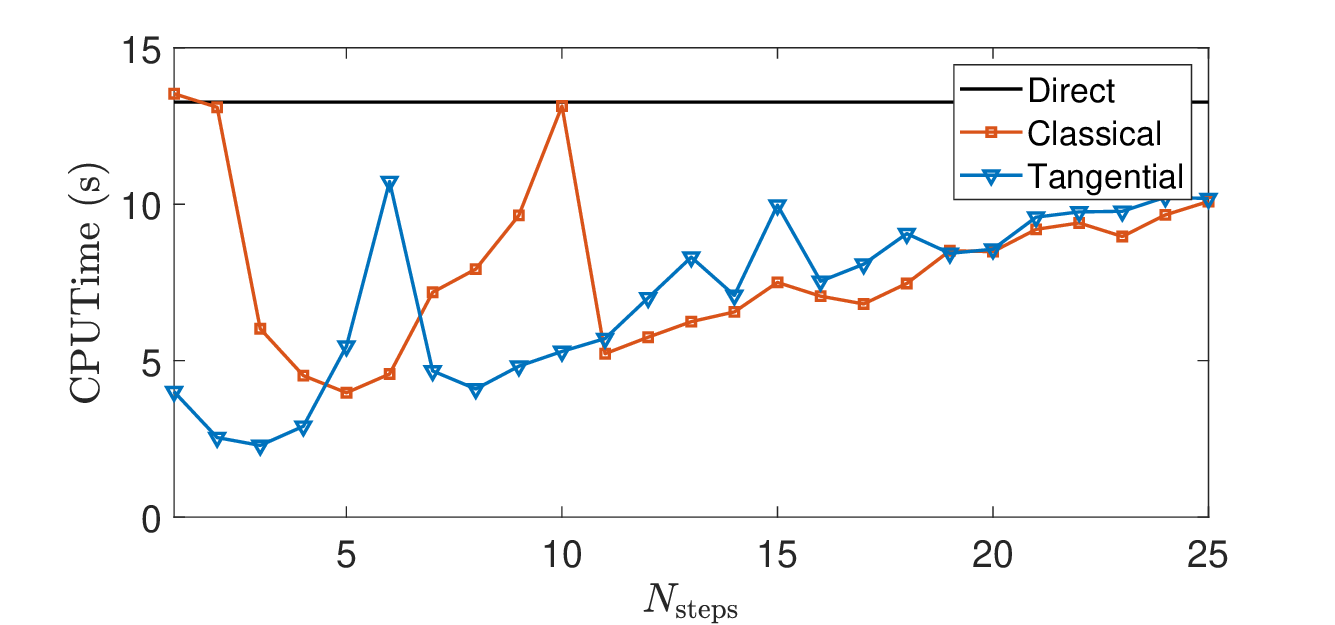}
		\end{minipage}
		\caption{\footnotesize RTR iterations (left) and computation time (right) versus the number of continuation steps for the fixed step size RNC on the matrix completion problem.}
	\end{figure}\begin{figure}                
		\centering 
		\includegraphics[trim=7.8cm 1.5cm 6.6cm 0,clip,width=\textwidth]{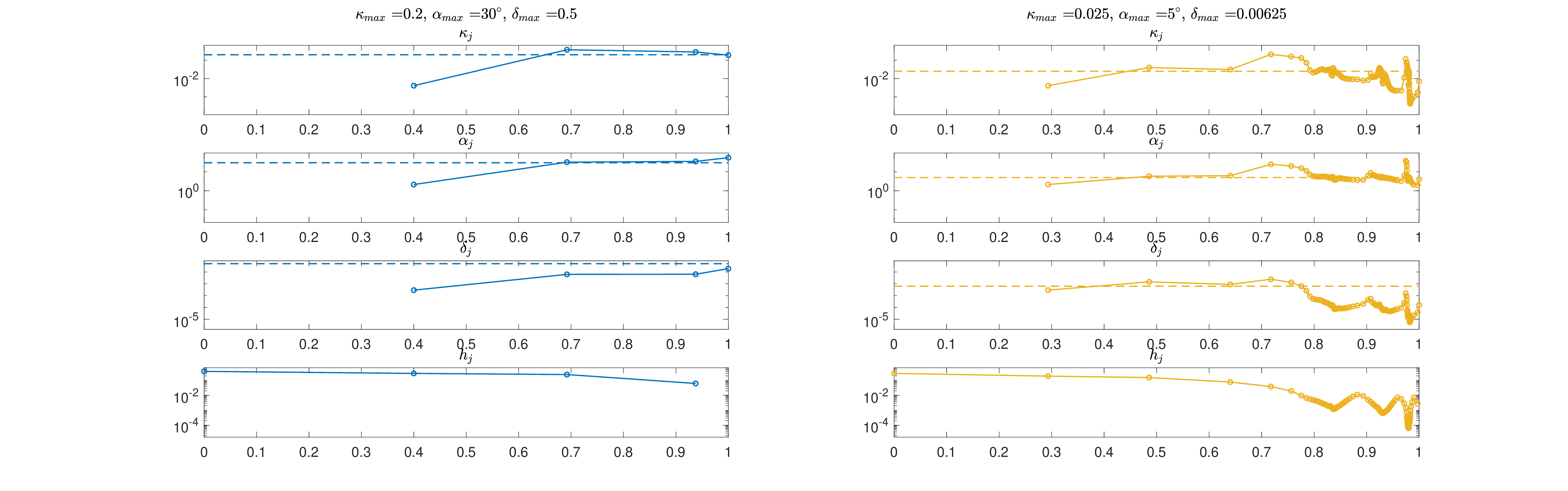}
		\caption{\footnotesize Step size selection on the matrix completion problem. Indicators~\eqref{eq.delta},~\eqref{eq.kappa},~\eqref{eq.alpha} measured after running algorithm~\ref{alg.stepSizeAdaptivity} for selecting the step size (bottom plot), are plotted against the corresponding continuation parameter $\lambda$. The dashed lines are the hyperparameters $\kappa_{\max}$, $\alpha_{\max}$, $\delta_{\max}$ used in the step size adaptivity procedure for each experiment. }
		\label{fig.MatrixCompletionPerformanceIndicators}
	\end{figure}\begin{figure}
		\centering
		\includegraphics[trim=1.75cm 1.2cm 1.75cm 0,clip,width=\textwidth]{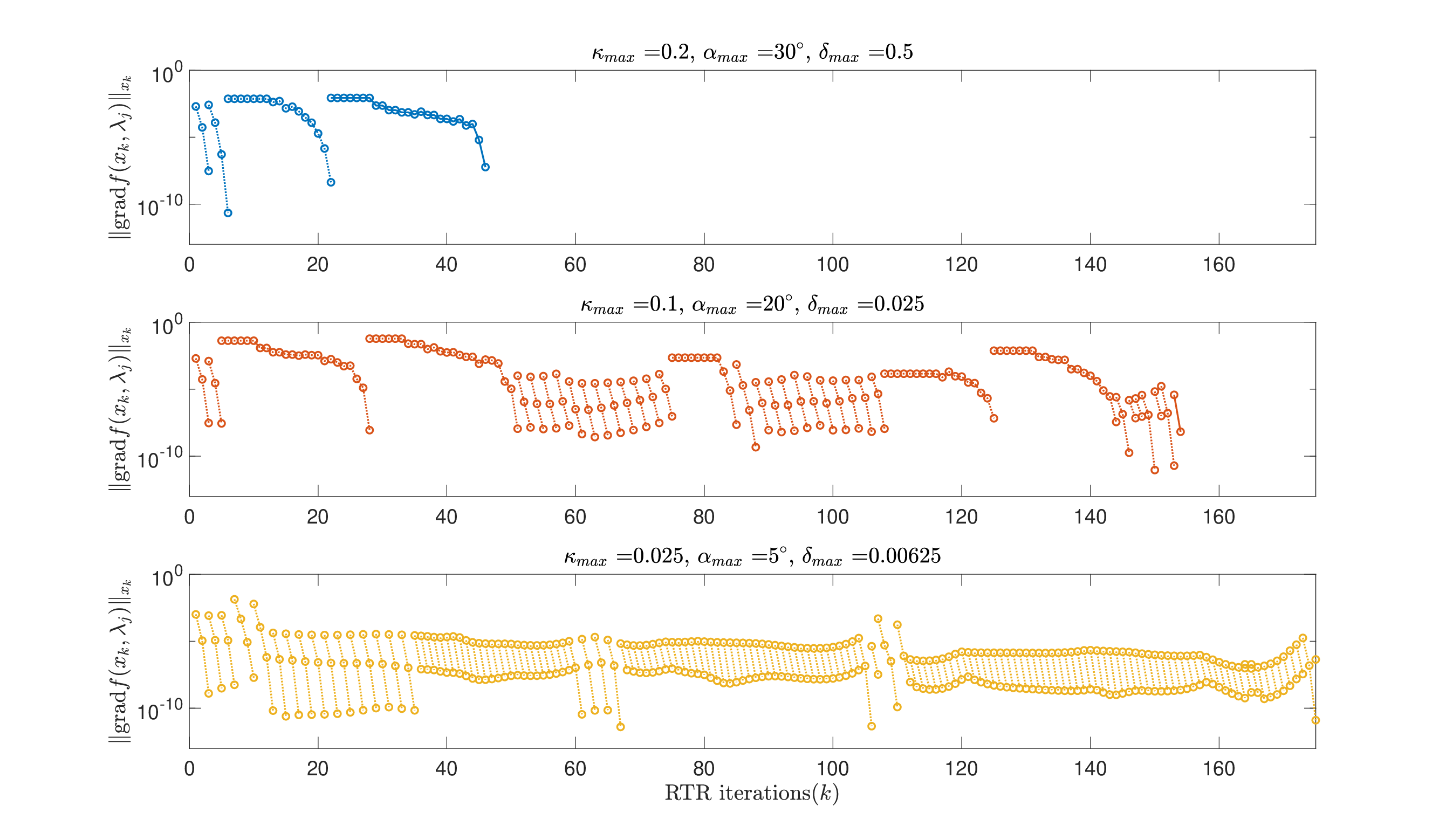}
		\caption{\footnotesize Convergence of the Riemannian gradient norm of each intermediate problem versus RTR iterations on the matrix completion problem. The step size adaptive RNC algorithm is compared for different step size adaptivity hyperparameters. }
		\label{fig.MatrixCompletionStepSizeAdaptive}
	\end{figure} As also confirmed by the step size adaptivity experiments (Figures~\ref{fig.MatrixCompletionPerformanceIndicators} and~\ref{fig.MatrixCompletionStepSizeAdaptive}), the solution curve to the homotopy generated by the instance curve~\eqref{eq.matrixCompletionInstanceCurve} is initially trivial to trace. Indeed, in the first part of the homotopy very few RTR iterations per correction are necessary for the classical prediction and even less for the tangential prediction. We clearly get a sense of the increasing difficulty by observing the results of Figure~\ref{fig.MatrixCompletionPerformanceIndicators}. Shorter and shorter step sizes are chosen in order to satisfy the step size selection criteria. Finally, as seen from the last plot in Figure~\ref{fig.MatrixCompletionStepSizeAdaptive}, completely removing the stagnation from the correction phase requires to enforce very strict step size selection criteria causing very small step sizes to be taken and numerous intermediate corrections to be performed. All in all, the most effective setting is the step size adaptive configuration with a permissive step size selection criteria (first plot in Figure~\ref{fig.MatrixCompletionStepSizeAdaptive}), which still exhibited transient stagnations. We therefore conclude that continuation is effective when the stagnation in the correction is mitigated, while removing completely this behavior requires an effort that is not worthwhile.
	\begin{table}
		\centering
		\caption{\footnotesize Summary of the number of iterations and computation time for the numerical experiments on the matrix completion problem. The parameters for the step size adaptive experiments (1), (2) and (3) are the same as in Figure~\ref{fig.MatrixCompletionStepSizeAdaptive}, from top to bottom. }
		\begin{tabular}{c|c|c|c|}
			\cline{2-4}
			& \multicolumn{3}{c|}{\textbf{Matrix completion}} \\ \hline
			\multicolumn{1}{|c|}{Direct (RTR)} & 1 & 159 & 10.67 \\ \hline
			\multicolumn{1}{|c|}{Direct (LRGeomCG)} & 1 & 1117 & 5.78 \\ \hline
			\multicolumn{1}{|c|}{Direct (LMAFit)} & 1 & 17309 & 15.65 \\ \hline
			\multicolumn{1}{|c|}{Fixed step size classical RNC} & 5 & 68 & 3.39 \\ \hline
			\multicolumn{1}{|c|}{Fixed step size tangential RNC} & 3 & 32 & 1.96 \\ \hline
			\multicolumn{1}{|c|}{Step size adaptive RNC (1)} & 4 & 46 & 4.96 \\ \hline
			\multicolumn{1}{|c|}{Step size adaptive RNC (2)} & 32 & 154 & 70.01 \\ \hline
			\multicolumn{1}{|c|}{Step size adaptive RNC (3)} & 143 & 175 & 259.30 \\ \hline
			& Corrections & Correction iterations & Time (s) \\ \cline{2-4} 
		\end{tabular}
		\label{tab.summaryMatrixCompletionExperiments}
	\end{table}
	
	\section{Conclusions}
	In this work, we have proposed a generalization of numerical continuation to the setting of Riemannian optimization and stated sufficient conditions for the existence of a solution curve. The central contribution is the RNC Algorithm~\ref{alg.riemannianContinuation}, a path-following predictor-corrector algorithm relying on the concept of retraction for the prediction combined with superlinearly converging Riemannian optimization routines such as Riemannian Newton method or the Riemannian Trust Region algorithm for the correction. This method can track a curve of critical points of a parametric Riemannian optimization problem when an initial point on the curve is given. We have generalized to the Riemannian case an adaptive step size strategy relying on the asymptotic expansion of the some performance indicators of the correction. Furthermore, we have provided the analysis of the prediction phase motivating the choice of tangential prediction over classical prediction.
	
	The behavior of our algorithm has been illustrated for the problem of computing the Karcher mean of positive definite matrices and for low-rank matrix completion. Particular homotopies have been proposed for both problems, thereby suggesting a more general approach for achieving this task: defining smooth curves of problem instances starting from an easily solvable one and ending at the instance of interest. This proved to be successful in particular for the matrix completion problem, where a fast decay of singular values leads to a challenging  optimization task.  The step size adaptivity proved to effectively control the Newton update vector norm, the Newton contraction rate and the prediction vectors angle allowing for the correction algorithms to directly exhibit superlinear convergence. However this came at a relatively high computational cost due to the small step sizes required making the fixed step size continuation or permissive step size selection more competitive. 
	
	\addcontentsline{toc}{section}{References}
	\bibliographystyle{alpha}
	\bibliography{bibliographyWithBibTex}
	
	\begin{appendices}
		\section{Proof of lemma~\ref{lem.asymptExpansion}}\label{a:proofLemma3}
		
		Our proof mimics the proof of the Euclidean case from~\cite[Section 6.1]{allg}, making use of a local chart to map the problem to $\Rbb^d$.
		
		We assume that $h$ is sufficiently small such that $x$, $y(h)$ and $z(h)$ are contained in the domain of the same local chart $\pa{\ucal,\varphi}$. All the involved points and functions are expressed in local coordinates~\cite[Chapter 1]{lee} associated with $\varphi$ and the tangent vectors are decomposed into the the local coordinate vector fields~\cite[Exmaple 8.2]{lee} induced by $\varphi$. The same strategy is used in the local convergence proof of the Riemannian Newton in~\cite[Theorem 6.3.2]{absilbook}.  In the following, for the convenience of the reader, we recall how the different entities are mapped to local coordinates; see~\cite{absilbook} for details.
		
		\begin{itemize}
			\item For a point $w\in \ucal$, we write $\hat w := \varphi(w)$. The solution point $x$, the predicted point $y(h)$ and the first RN iterate $z(h)$ become respectively,
			\begin{align*}
				\hx &:= \varphi(x),\\
				\hy(h) &:= \varphi(y(h)),\\
				\hz(h) &:=\varphi(z(h)).
			\end{align*}
			Conversely, for a vector $\hat w \in \hat\ucal:= \varphi(\ucal)$, we write $w:=\varphi^{-1}(\hat w)$. 
			\item For a tangent vector $\xi\in T_w\mcal$ for some $w\in \ucal$, we write $\hat \xi := \D\varphi(w)\pac{\xi}\in\Rbb^d$. The Riemannian gradient $\grad f(w,\lambda)$, its differential with respect to lambda $\ddp{\grad f(w,\lambda)}{\lambda}$, the tangential prediction vector $t(w,\lambda)$ and the RN update vector $n(w,\lambda)$ translate respectively to 
			\begin{align*}
				\hF(\hw,\lambda) &:= \D\varphi(w)\pac{\grad{f}(w,\lambda)},\\
				\hF_\lambda(\hw,\lambda) &:= \D\varphi(w)\pac{\ddp{\grad{f}(w,\lambda)}{\lambda}},\\
				\htt(\hw,\lambda) &:= \D\varphi(w)\pac{t(w,\lambda)},\\
				\hn(\hw,\lambda)&:=\D\varphi(w)\pac{n(w,\lambda)},
			\end{align*}
			for any $(\hw,\lambda)\in\hat \ucal \times \pac{0,1}$. Conversely, given $\hat\xi \in \Rbb^d$ and some $\hw\in\varphi\pa{\ucal}$, we write $\xi := \D\varphi^{-1}(\hw)\pac{\hat \xi}\in T_w\mcal$.
			\item The coordinate representation of the Riemannian Hessian is \begin{equation*}
				\hat H : \hat\ucal\times\pac{0,1}\to \Rbb^{d\times d}: (\hat w,\lambda)\mapsto \D\varphi(w)\pac{\hessx{f}(w,\lambda)\pac{\D\varphi^{-1}(\hat w)\pac{\cdot}}}.
			\end{equation*}
			\item As discussed in~\cite[Chapter 2]{leeRiemManifs}, the Riemannian metric can be represented with the Gramian matrix in the basis of coordinate vector field as  
			\begin{equation*}
				{\hat G_{\hat w}:\hat\ucal\to \Rbb^{d\times d}:\hat w\mapsto \pa{\scalp{\D\varphi^{-1}(\hat w)\pac{e_i}}{\D\varphi^{-1}(\hat w)\pac{e_j}}_w}_{i,j = 1,\dots,d}},
			\end{equation*}where $e_i$ are the canonical vectors of $\Rbb^d$. By smoothness of the Riemannian metric, this function is also smooth. Furthermore, given $\xi\in T_x\mcal$ for some $w\in\ucal$ it holds that $\left\|\xi\right\|_x = \sqrt{\hat\xi^\top \hG_{\hx} \hat\xi}.$
			\item Given a sufficiently small $\xi\in T_w\mcal$ for some $w\in\ucal$, the retraction point $R_w(\xi)$ is well defined and $R_w(\xi)\in\ucal$. For the local representation of such vectors, the coordinate representation of the retraction is  
			\begin{equation*}
				\hat R_{\hat w} (\hat\xi) = \varphi(R_w(\D \varphi^{-1}(\hat w)\pac{\hat\xi})).
			\end{equation*}
			\item Finally, the coordinate representation of the transporter is 
			\begin{equation*}
				\hTcal_{\hy\to \hx}:\Rbb^d\to \Rbb^d: \hat{\xi}\mapsto \D\varphi(x)\pac{\tcal_{y\to x}\pa{\D \varphi^{-1}(\hy)\pac{\hat{\xi}}}},\quad \forall\hy,\hx\in\hUcal.
			\end{equation*}
		\end{itemize}

		Note that the function $\hF_\lambda$ defined above coincides with the derivative of $\hF$ with respect to $\lambda$. However, the differential of $\hF$ with respect to its first argument, denoted $\hF_{\hx}$, does \emph{not} coincide with the $\hH$, the coordinate representation of the Hessian. Indeed, one obtains 
		
		\begin{equation}\label{eq:Fxhat}
			\begin{aligned}
				\hF_\hx(\hw,\lambda)[\cdot] &= \hH(\hw,\lambda)[\cdot] + \D^2\varphi(w)\pac{\grad{f}(w,\lambda),\D\varphi^{-1}(\hat w)\pac{\cdot}}\\
				&=   \hH(\hw,\lambda)[\cdot]  + \D^2\varphi(w)\pac{\D\varphi^{-1}(\hat w)\pac{\hF(\hw,\lambda)},\D\varphi^{-1}(\hat w)\pac{\cdot}} \\
				&= \hH(\hw,\lambda)[\cdot] + \hA(\hw)\pac{\hF(\hw,\lambda),\cdot},
			\end{aligned}
		\end{equation}
		where we defined the bilinear form
		\begin{equation} \label{eq:Ahat}
			\hA(\hw)\pac{\cdot,\cdot} = \D^2\varphi(w)\pac{\D\varphi^{-1}(\hat w)\pac{\cdot},\D\varphi^{-1}(\hat w)\pac{\cdot}}.
		\end{equation}
		On the solution curve, we have $\hF(\hx,\lambda) = 0$, so the second term in~\eqref{eq:Fxhat} vanishes for $\hat w = \hat x$ and we find ${\hF_\hx(\hx,\lambda) = \hH(\hx,\lambda)}$. By the definitions above, the coordinate representations of $y(h)$, $z(h)$, $t(w,\lambda)$ and $n(w, \lambda)$ have the following convenient expressions
		\begin{align}
			\label{eq:yhat}\hy(h) &= \hat R_{\hat x}(h\hat t(\hat x,\lambda)),\\ 
			\label{eq:zhat}\hz(h) &=  \hat R_{\hat y(h)}(\hat n(\hat y(h),\lambda + h)),\\
			\label{eq:that} \hat t(\hat w, \lambda) & =  -\hH(\hat w,\lambda)^{-1}\pac{\hat F_\lambda(\hat w,\lambda)},\\
			\label{eq:nhat}\hat n(\hat w, \lambda) & =  -\hH(\hat w,\lambda)^{-1}\pac{\hat F(\hat w,\lambda)}.
		\end{align}
		As noted in~\cite[Theorem 6.3.2]{absilbook}, we point out that the local rigidity property of the retraction transfers to its local chart version, i.e.
		\begin{equation*}
			\D \hat R_{\hw}(0)\pac{\hat \xi} = \hat \xi,\quad \forall \hw\in\hat \ucal, \:\hat\xi\in\operatorname{dom}(\hR_\hw).
		\end{equation*}
		Using the previous definitions, we conclude the following expressions:
		\begin{align*}
			\delta(x, \lambda,h)\! &=\! \sqrt{\hat n(\hat y(h),\lambda + h)^T \hat G_{\hat y(h)}\hat n(\hat y(h),\lambda + h)},\\
			\kappa(x, \lambda,h)\! &=\! \frac{\sqrt{\hat n(\hat z(h),\lambda + h)^T \hat G_{\hat z(h)}\hat n(\hat z(h),\lambda + h)}}{\delta(x,\lambda,h)},\\
			\alpha(x, \lambda,h)\! &=\! \acos\pa{\!\frac{\htt(\hx,\lambda)^T}{\sqrt{\htt(\hx,\lambda)^T\hG_\hx\htt(\hx,\lambda)}}\hG_{\hx}\frac{\hTcal_{\hy\to \hx}(\htt(\hy,\lambda+h))}{\sqrt{\hTcal_{\hy\to \hx}(\htt(\hy,\lambda+h))^T\hG_\hx\hTcal_{\hy\to \hx}(\htt(\hy,\lambda+h))}}}
		\end{align*}
		We are now in the position to perform Taylor expansion with respect to $h$ of these functions. 
		
		\noindent \emph{Result (i)}\\
		By combining~\eqref{eq:yhat} and~\eqref{eq:nhat} we have 
		\begin{equation} \label{eq:nhatOfY}
			\hn(\hat y(h),\lambda + h) = -\hH(\hat R_{\hat x}(h\hat t(\hat x,\lambda)),\lambda +h)^{-1}\pac{\hat F(\hat R_{\hat x}(h\hat t(\hat x,\lambda)),\lambda+h)}.
		\end{equation}
		Let us expand both terms separately. 
		\begin{equation} \label{eq:FhatExpansion}
			\begin{aligned}
				\hat F(\hat R_{\hat x}(h\hat t(\hat x,\lambda)),\lambda+h) \!&=\! \hat F(\hat x,\lambda)\! + \! h\!\pa{\!\hat F_{\hat x}(\hat x,\lambda)\!\pac{\hat t(\hat x,\lambda)}\! + \!\hat F_\lambda(\hat x,\lambda)} \! + \! h^2 c_1(\hat x, \lambda) \! + \! \Ocal{h^3} \\
				\!&=\! h^2 c_1(\hat x, \lambda) \! + \! \Ocal{h^3},
			\end{aligned}
		\end{equation}
		where the second equality follows from $\hF(\hx,\lambda) = 0$, $\hF_{\hx}(\hx,\lambda) = \hH(\hx,\lambda)$ and~\eqref{eq:that}.
		For later purposes, let us note the explicit expression
		\begin{equation}\label{eq:c1}
			\begin{aligned}
				c_1(\hat x, \lambda) = \frac{1}{2}\Big(\hF_{\hx\hx}(\hx,\lambda)\pac{\htt(\hx,\lambda),\htt(\hx,\lambda)} &+ \hF_\hx(\hx,\lambda)\pac{\D^2\hR_\hx(0)\pac{\htt(\hx,\lambda),\htt(\hx,\lambda)}}  \\  + 2 \hF_{\hx\lambda}(\hat x,\lambda)\pac{\htt(\hx,\lambda)} &+ \hF_{\lambda\lambda}(\hx,\lambda)\Big). 
			\end{aligned}
		\end{equation}
		Now note that 
		\begin{equation*}
			\hH(\hat R_{\hat x}(h\hat t(\hat x,\lambda)),\lambda +h) = \hH(\hat x,\lambda) + \Ocal{h}.
		\end{equation*}
		Then by smoothness of matrix inversion 
		\begin{equation*}
			\hH(\hat R_{\hat x}(h\hat t(\hat x,\lambda)),\lambda +h)^{-1} = \hH(\hat x,\lambda)^{-1}+ \Ocal{h}.
		\end{equation*}
		Combined with~\eqref{eq:FhatExpansion} one has
		\begin{equation}\label{eq:nhatOfYExpansion}
			\hn(\hat y(h),\lambda + h) = h^2 c_2(\hat x, \lambda) + \Ocal{h^{3}}.
		\end{equation}
		with $c_2(\hat x, \lambda) = -\hH(\hat x,\lambda)^{-1}\pac{c_1(\hat x, \lambda)}$. Noting that $	\hG_{\hat y(h)} = \hG_{\hat x} + \Ocal{h}$ we obtain
		\begin{align*}
			\delta(x, \lambda,h) =& \sqrt{\hat n(\hat y(h),\lambda + h)^T \hG_{\hat y(h)}\hat n(\hat y(h),\lambda + h)} \\ =& (h^4c_3(\hx, \lambda)^2 + \Ocal{h^5})^{1/2} \\ =& h^2 c_3(\hx, \lambda) + \Ocal{h^3},
		\end{align*}
		where $c_3(\hx, \lambda) := \sqrt{c_2(\hat x, \lambda)^T \hG_\hx c_2(\hat x, \lambda)}$. The last equality follows from the Taylor expansion of the square root in $c_3(\hx,\lambda)^2$. This is possible provided the $c_3(\hx, \lambda)$ does not vanish. By hypothesis~\eqref{eq.nonDegeneracy}, it can be shown that $c_1(\hx, \lambda)$ is not zero. Hence, $c_2(\hx, \lambda)$ and $c_3(\hx, \lambda)$ are also not zero.
		Setting $\delta_2(x, \lambda) := c_3(\varphi(x), \lambda)$, this concludes the proof of (i). \\
		
		\noindent \emph{Result (ii)}\\
		To obtain the expansion for $\kappa$, we combine result (i) with the expansion of the Newton direction evaluated in $\hz(h)$.
		For this purpose, note that by combining~\eqref{eq:zhat} and~\eqref{eq:nhat}
		\begin{equation} \label{eq:nhatOfZ}
			\hn(\hat z(h),\lambda + h) = -\hH(\hR_{\hy(h)}(\hn(\hy(h),\lambda + h),\lambda +h)^{-1}\pac{\hat F(\hR_{\hy(h)}(\hn(\hy(h),\lambda + h),\lambda+h)}.
		\end{equation}
		The Taylor expansion with respect to $\hn(\hy(h),\lambda + h)$ of the right-hand side term gives
		\begin{align*}
			\hF\pa{\hR_{\hy(h)}(\hn(\hy(h),\lambda + h)),\lambda + h} = \hF(\hy(h),\lambda + h) + \hH(\hy(h),\lambda + h)\pac{\hn(\hy(h),\lambda + h)}\\ +  \hA(\hy(h))\pac{\hF(\hy(h),\lambda + h),\hn(\hy(h),\lambda + h)}\\+ 
			\frac{1}{2}\hF_{\hx\hx}(\hy(h),\lambda + h)\pac{\hn(\hy(h),\lambda + h),\hn(\hy(h),\lambda + h)} \\ 
			+ \frac{1}{2}\hF_\hx(\hy(h),\lambda + h)\!\pac{\D^2\hR_\hx(0)\pac{\hn(\hy(h),\lambda + h),\hn(\hy(h),\lambda + h)}}\! + \Ocal{\|\hn(\hy(h),\lambda + h)\|^3}.
		\end{align*}
		The first two summands cancel out owing to~\eqref{eq:nhat}. Furthermore, by smoothness of the retraction and of the local charts, we have \begin{align*}
			&\hA(\hy(h)) = \hA(\hx) + \Ocal{h},\\
			&\hF_{\hx\hx}(\hy(h),\lambda + h) = \hF_{\hx\hx}(\hat x,\lambda) + \Ocal{h},\\
			&\hF_\hx(\hy(h),\lambda + h)\circ\D^2\hR_{\hy(h)}(0) = \hF_\hx(\hx,\lambda)\circ\D^2\hR_{\hx}(0) + \Ocal{h}.
		\end{align*}
		By plugging in the Taylor expansions of $\hn(\hy(h),\lambda + h)$ and $\hF(\hy(h),\lambda + h)$ given by~\eqref{eq:FhatExpansion} and~\eqref{eq:nhatOfYExpansion} respectively we obtain 
		\begin{equation*}
			\hF(\hz(h),\lambda + h) = h^4c_4(\hx, \lambda) + \Ocal{h^5},
		\end{equation*}
		for some $c_4(\hx, \lambda)$ not depending on $h$. 
		
		Now, for the left-hand side term in~\eqref{eq:nhatOfZ}, the Taylor expansion with respect to $\hn(\hy(h),\lambda + h)$ gives
		\begin{equation*}
			\hH(\hz(h),\lambda + h) = \hH(\hy(h),\lambda + h) + \Ocal{\|\hn(\hy(h),\lambda + h)\|} = \hH(\hx,\lambda) + \Ocal{h},
		\end{equation*}
		and thus 
		\begin{equation*}
			\hH(\hz(h),\lambda + h)^{-1} = \hH(\hx,\lambda)^{-1} + \Ocal{h}.
		\end{equation*}
		Therefore 
		\begin{equation*}
			\hn(\hz(h),\lambda + h) = h^4c_5(\hx, \lambda) + \Ocal{h^5},
		\end{equation*}
		where $c_5(\hx, \lambda) = -\hH(\hx,\lambda)^{-1}\pac{c_4(\hx, \lambda)}$. Finally, noticing that $\hG_{\hz(h)} = \hG_{\hx} + \Ocal{h}$, we can approximate the numerator of $\kappa$ as
		\begin{equation}\label{eq:nHatOfZNormExpansion}
			\sqrt{\hat n(\hat z(h),\lambda\! +\! h)^T G_{\hat z(h)}\hat n(\hat z(h),\lambda \!+\! h)} = \pa{h^8c_6(\hx, \lambda)^2\! +\! \Ocal{h^9}}^{1/2}\! = h^4c_6(\hx, \lambda)\! +\! \ocal{h^4},
		\end{equation}
		with $c_6(\hx, \lambda) = \sqrt{c_5(\hx, \lambda) G_\hx c_5(\hx, \lambda)}$. This allows to conclude that 
		\begin{equation*}
			\kappa(x, \lambda,h) = \frac{h^4c_6(\hx, \lambda) + \ocal{h^4}}{h^2 c_3(\hx, \lambda) + \Ocal{h^3}} = h^2 c_7(\hx, \lambda) + \ocal{h^2},
		\end{equation*}
		with $c_7(\hx, \lambda) = \frac{c_6(\hx, \lambda)}{c_3(\hx, \lambda)}$, where we used the Taylor expansion of the inverse function in $c_3(\hx,\lambda)$, which is non-zero as noted for result (i). This proves the expansion (ii) with ${\kappa_2(x, \lambda) = c_7(\varphi(x), \lambda)}$.
		
		\noindent \emph{Result (iii)}\\
		The proof for the prediction angle requires to expand the argument of the arcosine at second order and exploit the following Puiseux series expansion, for $q>0$:
		\begin{equation}\label{eq.PuiseuxSeries}
			\acos(1-q) = \sqrt{2q} + \frac{q^{3/2}}{6\sqrt{2}} + \Ocal{q^2}.
		\end{equation}
		By combining~\eqref{eq:yhat} and~\eqref{eq:that}, we find
		\begin{equation*}
			\htt(\hy(h),\lambda+h) = -\hH(\hat y(h),\lambda+h)^{-1}\pac{\hat F_\lambda(\hat y(h),\lambda+h)}.
		\end{equation*}

		Concerning the transport of this vector, we exploit the smoothness of $\hy$, $\hH$, $\hF_\lambda$ and of the transporter operator to conclude
		\begin{equation}\label{eq:transportExpansion}
			\hTcal_{\hy(h)\to \hx}\pa{\htt(\hy(h),\lambda + h)} = \htt(\hx,\lambda) + h \hTcal^{(1)}(\hx,\lambda) + h^2\hTcal^{(2)}(\hx,\lambda) + \Ocal{h^3},
		\end{equation}
		for some $\hTcal^{(1)}(\hx,\lambda)$ and $\hTcal^{(2)}(\hx,\lambda)$ depending smoothly only on $(\hx,\lambda)$.
		
		Let us from now on omit the dependence on $(\hx,\lambda)$ of these vectors (i.e. $\htt = \htt(\hx,\lambda)$, $\hTcal^{(1)} = \hTcal^{(1)}(\hx,\lambda)$, $\hTcal^{(2)} = \hTcal^{(2)}(\hx,\lambda)$). Computing the inner product of ~\eqref{eq:transportExpansion} with itself and using the Taylor expansion of the square root in $\|\htt\|^2$ we have
		\begin{equation}\label{eq:transportNormExpansion}
			\!\!\!\|\hTcal_{\hy(h)\to \hx}\!\pa{\htt(\hy(h),\lambda\!+\! h)}\!\|\!=\! \|\htt\| + h\frac{\htt^T\hG_{\hx}\hTcal^{(1)}}{\|\htt\|} + h^2\!\!\pa{\!\frac{2\htt^T\hG_{\hat x}\hTcal^{(2)}\!\!+\!\! \|\hTcal^{(1)}\|^2}{2\|\htt\|}\!-\!\frac{\pa{\!\htt^T\hG_{\hat x}\hTcal^{(1)}\!}\!\!^2}{2\|\htt\|^3}\!}\!.
		\end{equation} 
		Then, combing~\eqref{eq:transportExpansion} and~\eqref{eq:transportNormExpansion} with the expansion of the inverse function we get
		\begin{align*}
			\frac{\hTcal_{\hy(h)\to \hx}\pa{\htt(\hy(h),\lambda + h)}}{\|\hTcal_{\hy(h)\to \hx}\pa{\htt(\hy(h),\lambda + h)}\|} = \frac{\htt}{\|\htt\|} + h\pa{\frac{\hTcal^{(1)}}{\|t\|} - \frac{\htt^T\hG_{\hat x}\hTcal^{(1)}}{\|\htt\|^3}\htt}+\\ \frac{h^2}{2}\pa{\frac{3(\htt^T\hG_{\hat x}\hTcal^{(1)})^2}{\|\htt\|^5}\htt-\frac{2\htt^T\hG_{\hat x}\hTcal^{(2)} + \|\hTcal^{(1)}\|^2}{\|\htt\|^3}\htt - \frac{2\htt^T\hG_\hx\hTcal^{(1)}}{\|\htt\|^3}\hTcal^{(1)} + \frac{2\hTcal^{(2)}}{\|\htt\|}} + \Ocal{h^3}.
		\end{align*}
		Computing the inner product of this expression with $\frac{\htt}{\|\htt\|}$ with respect to the metric $\hG_{\hx}$ we get $\cos\pa{\alpha(x,\lambda, h)}$ and it can be see that the term proportional to $h$ vanishes. Thus if we denote $\cos(\theta_\hx(\hat\xi,\hat\eta)) = \frac{\hat{\xi}^T\hG_\hx\hat{\eta}}{\|\hat{\xi}\|\|\hat{\eta}\|}$ we find 
		\begin{align*}
			\cos(\alpha(x,\lambda)) &= \frac{\htt^T}{\|\htt\|}\hG_\hx\frac{\hTcal_{\hy(h)\to \hx}\pa{\htt(\hy(h),\lambda + h)}}{\|\hTcal_{\hy(h)\to \hx}\pa{\htt(\hy(h),\lambda + h)}\|}  \\
			&= 1 + \frac{h^2}{2}\pa{\frac{\pa{\htt^T\hG_\hx\hTcal^{(1)}}^2}{\|\htt\|^4} - \frac{\|\hTcal^{(1)}\|^2}{\|\htt\|^2}} + \Ocal{h^3}\\
			&= 1-h^2\frac{\sin(\theta_\hx(\htt,\hTcal^{(1)}))^2\|\hTcal^{(1)}\|^2}{2\|\htt\|^2}+\Ocal{h^3}.
		\end{align*}
		Assumptions~\eqref{eq.nonDegeneracy} and~\eqref{eq:missingAssumption} imply that coefficient multiplied by $h^2$ is not zero. Finally, using the Puiseux series~\eqref{eq.PuiseuxSeries} for the arcosine, we conclude
		\begin{equation*}
			\alpha(x,\lambda, h) = h\alpha_1(x, \lambda) + \Ocal{h^2},\text{ with } \alpha_1(x, \lambda) = \frac{|\sin(\theta_\hx(\htt,\hTcal^{(1)}))|\|\hTcal^{(1)}\|}{\|\hat t\|}.
		\end{equation*}
		
		\flushright$\square$
		
	\end{appendices}
	
\end{document}